\newif\ifcolorcomments
\newcommand{\allowcomments}[4]{
\newcommand{#1}[1]{\ifdraft{\ifcolorcomments{\textcolor{#4}{##1 --#3}}\else{\textsl{ ##1 \ --#3}}\fi}\else{}\fi}
}
\allowcomments{\commumtaz}{MH}{Mumtaz}{green}
\allowcomments{\comjohannes}{JS}{Johannes}{blue}
\allowcomments{\comdavid}{DS}{DS}{magenta}
\newtheorem{theorem}{Theorem}[section]
\newtheorem{lemma}[theorem]{Lemma}
\newtheorem{proposition}[theorem]{Proposition}
\newtheorem{corollary}[theorem]{Corollary}
\theoremstyle{definition}
\newtheorem{definition}[theorem]{Definition}
\newtheorem{remark}[theorem]{Remark}
\newcommand{\DD}{\mathcal D}
\newcommand{\HH}{\mathcal H}
\newcommand{\MM}{\mathcal M}
\newcommand{\N}{\mathbb N}
\newcommand{\Q}{\mathbb Q}
\newcommand{\R}{\mathbb R}
\newcommand{\UU}{\mathcal U}
\newcommand{\VV}{\mathcal V}
\newcommand{\Z}{\mathbb Z}
\newcommand{\mbf}{\mathbf}
\newcommand{\0}{\mbf 0}
\newcommand{\ba}{\mbf a}
\newcommand{\ee}{\mbf e}
\newcommand{\hh}{\mbf h}
\newcommand{\qq}{\mbf q}
\newcommand{\rr}{\mbf r}
\newcommand{\vv}{\mbf v}
\newcommand{\ww}{\mbf w}
\newcommand{\xx}{\mbf x}
\newcommand{\yy}{\mbf y}
\newcommand{\zz}{\mbf z}
\renewcommand{\text}{\textup}
\newcommand{\NPC}[1]{\ignorespaces}
\newif\ifdraft\drafttrue
\renewcommand{\emptyset}{{\diameter}}
\newcommand{\DDD}{\DD_n^{\theta}(\Psi)}
\newtheorem*{GBSP1}{Generalised Baker-Schmidt Problem for Hausdorff Measure: dual setting}
\newcommand{\Qp}{\mathbb{Q}_{p}}
\newcommand{\Zp}{\mathbb{Z}_{p}}
\begin{document}
\title{Dual $p$-adic Diophantine approximation on manifolds}

\author[Mumtaz Hussain]{Mumtaz Hussain}
\address{Mumtaz Hussain,  Department of Mathematical and Physical Sciences,  La Trobe University, Bendigo 3552, Australia. }
\email{m.hussain@latrobe.edu.au}

\author[Johannes Schleischitz]{Johannes Schleischitz} 
\address{Johannes Schleischitz, Middle East Technical University, Northern Cyprus Campus, Kalkanli, G\"uzelyurt}
\email{johannes@metu.edu.tr ; jschleischitz@outlook.com}

\author{ Benjamin Ward}
\address{Ben Ward,  Department of Mathematical and Physical Sciences,  La Trobe University, Bendigo 3552, Australia. }
\email{Ben.Ward@latrobe.edu.au}

\begin{abstract} 
The Generalised Baker-Schmidt Problem (1970)  concerns the Hausdorff measure of the set of $\psi$-approximable points on a nondegenerate manifold. Beresnevich-Dickinson-Velani (in 2006, for the homogeneous setting) and Badziahin-Beresnevich-Velani (in 2013,  for the inhomogeneous setting) proved the divergence part of this problem for dual approximation on arbitrary nondegenerate manifolds. The divergence part has also been resolved for the $p$-adic setting by Datta-Ghosh in 2022 for the inhomogeneous setting. The corresponding convergence counterpart represents a challenging open problem. In this paper, we prove the homogeneous $p$-adic convergence result for hypersurfaces of dimension at least three with some mild regularity condition, as well as for some other classes of manifolds satisfying certain conditions. We provide similar, slightly weaker results for the inhomogeneous setting. We do not restrict to monotonic approximation functions. 
\end{abstract}
\maketitle

\maketitle

\section{Dual Diophantine approximation on manifolds}

Throughout, let $n\geq 1$ be a fixed integer and $\qq:=(q_1,\ldots, q_n)\in \Z^n$. Let $\Psi:\Z^n\to[0, \infty)$ be a \emph{multivariable approximating function}, that is,  $\Psi$ has the property that
$$\Psi(\qq) \rightarrow~0 \text{ as } \|\qq\|:=\max(|q_1|, \ldots, |q_n|) \rightarrow~\infty.$$ For $\theta\in \R$, consider the set
\begin{equation*}
  \DDD:=\left\{\xx\in\R^n:\begin{array}{l}
  |\qq\cdot\xx+p+\theta|<\Psi(\qq)  \; \text{for infinitely many} \ (p, \qq)\in\Z\times \Z^{n}
                           \end{array}
\right\},
\end{equation*}
where $\xx=(x_1,\dots,x_n)\in\R^n$ so that
\begin{equation*}
\qq\cdot\xx=q_{1}x_{1}+\dots +q_{n}x_{n}.
\end{equation*}
 A vector $\xx \in \R^n$ will be called dually \emph{$(\Psi, \theta)$-approximable} if it lies in the set $\DDD$. The set $\DDD$ corresponds to \emph{inhomogeneous} Diophantine approximation, and when $\theta= 0$ the problem reduces to \emph{homogeneous} approximation. In this case we write $\DD_{n}^{0}(\Psi):=\DD_{n}(\Psi)$. When $\Psi$ is of the form $\Psi(\qq)=\psi(\|\qq\|)$ for some norm $\|\cdot\|$ and $\psi:[0,\infty) \to[0,\infty)$, we say $\psi$ is a univariable approximation function. \par 
We are interested in the `size' of the set $\DDD$ with respect to $f$-dimensional Hausdorff measure $\HH^f$ for some dimension function $f$. By a dimension function $f$ we mean an increasing continuous function $f:\mathbb{R}_{+}\to \mathbb{R}_{+}$ with $f(0)=0$. When $f(r)=r^{n}$ then $\HH^f$-measure is comparable to $n$-dimensional Lebesgue measure.

A classical result due to Dirichlet tells us that $\DD_{n}(\psi_{n})=\R^{n}$ for $\psi_{n}(r)=r^{-n}$, and an application of the Borel Cantelli lemma informs us that the set of \emph{very well approximable} points (there exists $\epsilon>0$ such that $\xx \in \DD_{n}(\psi_{n+\epsilon})$)  is a Lebesgue nullset. Furthermore, the generalised set of inhomogeneous very well approximable points is also a nullset. \par
 The following theorem due to Schmidt summarises the Lebesgue measure theory.

\begin{theorem}[Schmidt \cite{Sch64}] \label{Schmidt}
Fix $\theta \in \R$ and suppose $n\geq 2$. Let $\Psi:\Z^{n} \to [0,\infty)$ be a multivariable approximation function. Then
\begin{equation*}
\lambda_{n}\left( \DD_{n}^{\theta}(\Psi) \right) = \begin{cases}
0 \quad \quad \text{ if } \, \sum\limits_{\qq \in \Z^{n}} \Psi(\qq) < \infty, \\[2ex]
{\rm full} \quad \text{ if } \, \sum\limits_{\qq \in \Z^{n}} \Psi(\qq) = \infty.
\end{cases}
\end{equation*}
\end{theorem}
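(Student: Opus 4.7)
The proof naturally splits into the convergence and divergence halves, with convergence being a routine Borel--Cantelli and divergence requiring a quasi-independence argument that uses $n\geq 2$ crucially.

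For convergence, my plan is as follows. The set $\DDD$ is invariant under the action of $\Z^n$ on $\R^n$ by translation, so it is enough to work on the unit cube and bound $\lambda_n(\DDD\cap[0,1)^n)$. For each $\qq\in\Z^n\setminus\{\0\}$, define
\[
E_\qq:=\{\xx\in[0,1)^n:\,|\qq\cdot\xx+p+\theta|<\Psi(\qq)\text{ for some }p\in\Z\}.
\]
A direct Fubini computation (orient $\R^n$ along $\qq/\|\qq\|$, integrate the one-dimensional slab measure) yields $\lambda_n(E_\qq)\leq 2\Psi(\qq)$. Then $\sum_\qq\Psi(\qq)<\infty$ gives $\sum_\qq\lambda_n(E_\qq)<\infty$, and the convergence Borel--Cantelli lemma forces $\lambda_n(\limsup_\qq E_\qq)=0$, which is the desired conclusion since $\DDD\cap[0,1)^n\subseteq\limsup_\qq E_\qq$.

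For the divergence half I would apply the Chung--Erd\H{o}s (divergence) Borel--Cantelli lemma to the family $\{E_\qq\}$ on the torus $\T^n$. This requires a quasi-independence estimate of the form
\[
\sum_{\qq,\qq'\in Q}\lambda_n(E_\qq\cap E_{\qq'})\ll\Bigl(\sum_{\qq\in Q}\lambda_n(E_\qq)\Bigr)^{\!2}
\]
for large finite $Q\subset\Z^n$. The key geometric observation, using $n\geq 2$, is that whenever $\qq,\qq'\in\Z^n$ are $\Q$-linearly independent the linear map $L_{\qq,\qq'}:\T^n\to\T^2$, $\xx\mapsto(\qq\cdot\xx,\qq'\cdot\xx)\bmod 1$, has rank $2$ with $(n-2)$-dimensional fibres of uniformly controlled volume. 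Pulling back the product slab $\{|t_1+\theta|<\Psi(\qq)\}\times\{|t_2+\theta|<\Psi(\qq')\}$ on $\T^2$ through $L_{\qq,\qq'}$ and integrating along fibres gives the clean product-type bound
\[
\lambda_n(E_\qq\cap E_{\qq'})\ll\Psi(\qq)\Psi(\qq').
\]

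The main obstacle is the pairs $(\qq,\qq')$ that are $\Q$-linearly dependent, i.e.\ $\qq'=k\qq$ for some rational $k$, because for these the auxiliary map $L_{\qq,\qq'}$ degenerates and the slabs $E_\qq$, $E_{\qq'}$ are genuinely correlated. I would handle this by grouping: write each nonzero $\qq\in\Z^n$ uniquely as $\qq=k\qq_0$ with $\qq_0$ primitive and $k\in\N$, and set $F_{\qq_0}:=\bigcup_{k\geq 1}E_{k\qq_0}$. For two distinct primitive directions $\qq_0,\qq_0'$, pairs $(k\qq_0,k'\qq_0')$ are automatically $\Q$-linearly independent, so the previous estimate applies and, after summation in $k,k'$, yields $\lambda_n(F_{\qq_0}\cap F_{\qq_0'})\ll\lambda_n(F_{\qq_0})\lambda_n(F_{\qq_0'})$. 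A one-dimensional gap argument along each primitive direction shows $\sum_{\qq_0\text{ prim.}}\lambda_n(F_{\qq_0})$ diverges whenever $\sum_\qq\Psi(\qq)$ diverges, so Chung--Erd\H{o}s gives $\lambda_n(\limsup_{\qq_0}F_{\qq_0})=1$. Since $\limsup_{\qq_0}F_{\qq_0}\subseteq\DDD$ (infinitely many primitive directions each contribute at least one $\qq$), the divergence half follows. I expect the single nontrivial calculation to be the one-dimensional gap argument controlling overlaps inside each $F_{\qq_0}$ without monotonicity of $\Psi$.
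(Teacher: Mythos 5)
This theorem is cited from Schmidt \cite{Sch64}, with the $n=1$ caveat attributed to Harman \cite{Har98}; the paper gives no proof of it, so there is no in-paper argument to compare against. Your convergence half is routine and correct. Your divergence half has the right architecture and is close in spirit to Schmidt's own second-moment argument: for $\Q$-linearly independent $\qq,\qq'$ the events $E_\qq,E_{\qq'}$ are genuinely \emph{independent}, because the homomorphism $\T^n\to\T^2$, $\xx\mapsto(\qq\cdot\xx,\qq'\cdot\xx)\bmod 1$, has injective dual $\Z^2\to\Z^n$, $(a,b)\mapsto a\qq+b\qq'$, hence is surjective and pushes Haar forward to Haar. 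Grouping by primitive direction is also the right move.

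However, the step you defer to a ``one-dimensional gap argument'' is not a calculation; it is the crux of the theorem, and as you have formulated it the claim is false. Take $\Psi$ supported on a single primitive ray $\{k\qq_0:k\in\N\}$; this is a legitimate multivariable approximation function for any $n$. Then every $F_{\qq_0'}$ with $\qq_0'\ne\pm\qq_0$ is empty, so $\sum_{\qq_0'}\lambda_n(F_{\qq_0'})=\lambda_n(F_{\qq_0})\le1$ is bounded, while $\sum_\qq\Psi(\qq)=\sum_k\Psi(k\qq_0)$ can diverge. Worse, a Duffin--Schaeffer-type choice of $k\mapsto\Psi(k\qq_0)$ — the very obstruction the paper's remark invokes to exclude $n=1$ — makes the induced one-dimensional union $\bigcup_k E_{k\qq_0}$ have small measure even though $\sum_k\Psi(k\qq_0)=\infty$, because the slabs pile up on the same set of rationals. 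In that regime Chung--Erd\H{o}s applied to $\{F_{\qq_0}\}$ has nothing to bite on: the sum of measures cannot be pushed to infinity, and the method yields at best $\lambda_n(\limsup F_{\qq_0})\ge S/(S+1)<1$ where $S=\sum\lambda_n(F_{\qq_0})$. So the missing piece is not an internal overlap estimate within a single $F_{\qq_0}$; you need either a genuinely different mechanism for $\Psi$ concentrated on few primitive directions, or a hypothesis that forbids such concentration (Schmidt's original theorem takes $\Psi$ depending only on $\|\qq\|$, which spreads the mass over $\asymp T^{n-1}$ directions at scale $T$ and thereby avoids exactly this degeneracy). Until that is supplied, the divergence half does not close.
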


Here by ``{\rm full}" we mean the complement is a nullset. Note that for $n=1$, the monotonicity of $\Psi$ is required in the above statement, since even in the homogeneous case the result is known to be false due to the work of Duffin and Schaeffer, see for example \cite[Theorem 2.8]{Har98}. It should be remarked this theorem contains the notable Khintchine-Groshev Theorem ($\theta=0$ and $\Psi(\qq)=\psi(\|\qq\|)$ monotonic). \par 
Using a `slicing' technique and the mass transference principle, Beresnevich and Velani extended Schmidt's theorem to the Hausdorff measure statement which reads as follows.

\begin{theorem}[Beresnevich and Velani \cite{BV06b}] \label{SBV}
Fix $\theta \in \R$ and suppose $n\geq2$. Let $\Psi:\Z^{n}\to[0,\infty)$ be a multivariable approximation function and let $f$ be a dimension function such that $r^{-n}f(r)\to \infty$ as $r \to 0$. Assume $r^{-n}f(r)$ is decreasing and $r^{-n+1}f(r)$ is increasing. Then
\begin{equation*}
\HH^{f}\left(\DD_{n}^{\theta}(\Psi)\right)= \begin{cases}
0 & \text{ if } \, \sum\limits_{\qq \in \Z^{n}} \|\qq\|^{n}\Psi(\qq)^{1-n}f\left(\frac{\Psi(\qq)}{\|\qq\|}\right) < \infty, \\[2ex]
\infty &\text{ if } \, \sum\limits_{\qq \in \Z^{n}} \|\qq\|^{n}\Psi(\qq)^{1-n}f\left(\frac{\Psi(\qq)}{\|\qq\|}\right)  = \infty.
\end{cases}
\end{equation*}
\end{theorem}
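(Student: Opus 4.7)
Observe that $\DDD=\limsup_{\|\qq\|\to\infty}\bigcup_{p\in\Z}R_{p,\qq}$, where
\[
R_{p,\qq}\df\{\xx\in\R^n:|\qq\cdot\xx+p+\theta|<\Psi(\qq)\}
\]
is an affine slab of Euclidean thickness $2\delta_\qq$ normal to $\qq$, with $\delta_\qq\df\Psi(\qq)/\|\qq\|$. Since the defining condition is $\Z^n$-periodic in $\xx$ (any integer shift of $\xx$ can be absorbed into $p$), one reduces to the unit cube $[0,1]^n$. I would split the argument into the standard convergence upper bound and the more delicate divergence lower bound.

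\textbf{Convergence.} Apply the Hausdorff--Cantelli lemma to the efficient cover of each slab by Euclidean balls of radius $\delta_\qq$. For fixed $\qq$, only $\asymp\|\qq\|$ integers $p$ give a slab meeting $[0,1]^n$ (since $\qq\cdot\xx$ ranges over an interval of length $\asymp\|\qq\|$), and each such slab is covered by $\asymp \delta_\qq^{-(n-1)}$ balls of that radius. The two monotonicity hypotheses on $f$ are precisely what force $\delta_\qq$ to be the optimal covering radius: a cover by balls of radius $r<\delta_\qq$ has Hausdorff cost $\asymp\delta_\qq\,f(r)/r^n$, minimised at $r=\delta_\qq$ because $r^{-n}f(r)$ is decreasing, while a cover by balls of radius $r>\delta_\qq$ has cost $\asymp f(r)/r^{n-1}$, again minimised at $r=\delta_\qq$ because $r^{-(n-1)}f(r)$ is increasing. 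Summing yields
\[
\sum_{\qq\in\Z^n}\|\qq\|\,\delta_\qq^{-(n-1)}f(\delta_\qq)\;\asymp\;\sum_{\qq\in\Z^n}\|\qq\|^n\Psi(\qq)^{1-n}f\!\left(\frac{\Psi(\qq)}{\|\qq\|}\right),
\]
which is finite by hypothesis, so $\HH^f(\DDD)=0$.

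\textbf{Divergence.} This is the main obstacle. I would upgrade Schmidt's Lebesgue theorem (Theorem \ref{Schmidt}) to a Hausdorff statement via a Mass Transference Principle for neighbourhoods of codimension-one affine subspaces. Introduce a fattened approximation function by enlarging each slab thickness from $\delta_\qq$ to
\[
\widetilde\delta_\qq\df\frac{f(\delta_\qq)}{\delta_\qq^{n-1}},\qquad \widetilde\Psi(\qq)\df\|\qq\|\,\widetilde\delta_\qq=\|\qq\|^n\Psi(\qq)^{1-n}f\!\left(\frac{\Psi(\qq)}{\|\qq\|}\right);
\]
the assumption $r^{-n}f(r)\to\infty$ ensures $\widetilde\delta_\qq\geq\delta_\qq$ for $\|\qq\|$ large. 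Divergence of the Hausdorff series then reads $\sum_\qq\widetilde\Psi(\qq)=\infty$, so Theorem \ref{Schmidt} gives $\DD_n^\theta(\widetilde\Psi)$ full Lebesgue measure in $[0,1]^n$. The Beresnevich--Velani MTP for hyperplanes then transfers this full-measure statement on the fat slabs of thickness $\widetilde\delta_\qq$ to infinite $\HH^f$-measure on the original slabs, yielding $\HH^f(\DDD)=\infty$. In concrete terms I would execute this via the slicing approach: write $\R^n=\R^{n-1}\times\R$, intersect $\DDD$ with lines parallel to a fixed coordinate axis, obtain a lower-dimensional Khintchine-type divergence statement on almost every such line by reducing to Schmidt's theorem in one fewer variable, and combine with a Fubini-type lower bound $\HH^f(E)\gtrsim\int\HH^g(E\cap L_t)\,\dee t$ with $g(r)=f(r)/r^{n-1}$, valid precisely under the stated monotonicity hypotheses on $f$. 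Verifying the hypotheses of this slicing step and controlling the anisotropic geometry of the slabs at the Hausdorff scale is where almost all of the work goes.
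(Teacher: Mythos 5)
The paper does not prove Theorem~\ref{SBV}; it is quoted verbatim from \cite{BV06b}, with the method described in one sentence as ``a slicing technique and the mass transference principle.'' Your proposal matches that method: a standard Hausdorff--Cantelli covering for convergence, and slicing plus the one-dimensional Mass Transference Principle for divergence, with Schmidt's theorem (Theorem~\ref{Schmidt}) supplying the ambient Lebesgue-full input. So there is nothing in the paper to compare against beyond this sentence, and your outline is consistent with it.

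A few details you gloss over are the actual content of \cite{BV06b} and are worth naming. First, $\widetilde\Psi$ need not be an admissible approximation function (it need not tend to $0$), so one must truncate before invoking Theorem~\ref{Schmidt}. Second, when you slice along lines $L_y$ parallel to $\ee_n$, the resulting intervals have radii $\Psi(\qq)/|q_n|$, not $\delta_\qq = \Psi(\qq)/\|\qq\|$; this mismatch is handled in \cite{BV06b} by restricting the divergent sum to $\qq$ with $|q_n|\asymp\|\qq\|$, which preserves divergence. Third, ``Schmidt's theorem in one fewer variable'' is not what is applied on a slice: the correct chain is the $n$-variable Schmidt theorem for the fattened $\widetilde\Psi$, then Fubini to get $1$-dimensional Lebesgue-fullness on a.e.\ line, then the $1$-dimensional ball MTP to upgrade that to infinite $\HH^g$-measure of the unfattened intervals, and finally the slicing lemma (your ``Fubini-type lower bound,'' more precisely the implication: positive-measure many slices with $\HH^g=\infty$ forces $\HH^f=\infty$, requiring $g(r)=r^{1-n}f(r)$ increasing). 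These are exactly the places where the stated monotonicity hypotheses on $f$ are used, as you anticipate, and none of them breaks your plan --- they are simply the work your sketch defers.
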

Again, note that this result contains Jarnik's Theorem (in the dual setting), and one can deduce the dual version of the inhomogeneous Jarnik-Besicovich Theorem due to Levesley \cite{Lev98}.

\subsection{Diophantine approximation on manifolds}
In 1932, Mahler initiated the study of Diophantine approximation on dependent quantities by conjecturing that the set of very well approximable points on the Veronese curve $\VV_{n}=\{(x,x^{2},\dots ,x^{n}):x\in \R\}$ is a nullset with respect to the induced Lebesgue measure on $\VV_{n}$. That is, Mahler conjectured that
\begin{equation*}
\left\{ x \in \R :  \exists \, \varepsilon>0 \text{ such that } \begin{array}{c} |q_{1}x +q_{2}x^{2} + \dots + q_{n}x^{n} -p|<\left(\max_{1\leq i \leq n}|q_{i}|\right)^{-n-\varepsilon} \\
\text{for infinitely many}  (p,q_{1},\dots , q_{n})\in \Z^{n+1}
\end{array} \right\}
\end{equation*}
is a Lebesgue nullset. Sprindzhuk proved Mahler's conjecture to be true and conjectured that the result remained true for any analytic nondegenerate manifold \cite{Spr69}. It was not until the fundamental work of Kleinbock and Margulis \cite{KM98} that Sprindzhuk’s conjecture was proven true\footnote{ The main results of this paper are exclusively in the dual setting, but it should be noted that the result of Kleinbock and Margulis was proved in the generalised multiplicative setting.}. Since this breakthrough paper, a range of results in quick succession have been proven in this area. We provide a brief survey of key results, which can be split into three parts: \par
\textit{Extremality} refers to results associated with Mahler's (and subsequently Sprindzhuk's) conjecture. Specifically the size, in terms of the induced Lebesgue measure, of the set of very well approximable points contained in some manifold is a nullset. A manifold is said to be extremal if it satisfies Sprindzhuk's conjecture.\par 
\textit{Ambient measure} results refer to analogues of Theorem~\ref{Schmidt} in the setting of Diophantine approximation on dependent quantities, that is the induced Lebesgue measure of $\DD_{n}^{\theta}(\Psi)\cap \MM$. \par 
\textit{Hausdorff theory} results refer to the Hausdorff measure and dimension of  $\DD_{n}^{\theta}(\Psi)\cap \MM$. In full generality, a complete Hausdorff measure treatment akin to Theorem~\ref{SBV} for manifolds $\MM$  represents a deep open problem referred to as the Generalised Baker-Schmidt Problem (GBSP) inspired by the pioneering work of Baker and Schmidt \cite{BakerSchmidt}. 

\begin{GBSP1}
Let $\MM$ be a nondegenerate submanifold of $\R^n$ with $\dim\MM=d$ and $n \geq 2$. Let $\Psi$ be a multivariable approximating function.  Let $f$ be a dimension function such that $r^{-d}f(r)\to\infty$ as $r\to 0.$ Assume that $r\mapsto r^{-d}f(r)$ is decreasing and $r\mapsto r^{1-d}f(r)$ is increasing. Prove that 

\begin{equation*}
  \HH^f( \DDD\cap\MM)=\left\{\begin{array}{cl}
 0 &  {\rm if } \quad\sum\limits_{\qq\in\Z^n\setminus \{\0\}}\|\qq\|^d\Psi(\qq)^{1-d }f\left(\frac{\Psi(\qq)}{\|\qq\|}\right)< \infty,\\[3ex]
 \infty &  {\rm if } \quad \sum\limits_{\qq\in\Z^n\setminus \{\0\}}\|\qq\|^d\Psi(\qq)^{1-d}f\left(\frac{\Psi(\qq)}{\|\qq\|}\right)=\infty.
                                     \end{array}\right.
\end{equation*}
\end{GBSP1}

Note the results of extremality and ambient measure would follow from a result of the above form. For brevity, the following results are stated with few details. In some cases, further technical details, especially properties on the approximation function $\Psi$ and dimension function $f$, are required for the statement given to be true.

\begin{itemize}
\item \textit{Extremality}: Sprindzhuk proved this for the Veronese curve \cite{Spr69}, Kleinbock and Margulis proved this for nondegenerate manifolds \cite{KM98}. In the inhomogeneous setting Badziahin proved the result for nondegenerate planar curves \cite{Bad10}, and Beresnevich and Velani extended the work of Kleinbock and Margulis to the inhomogeneous setting for nondegenerate manifolds \cite{BV10}.
\item \textit{Ambient measure}: The convergence case was proven for nondegenerate manifolds independently by Bernik, Kleinbock, and Margulis \cite{BKM01} and Beresnevich \cite{Ber02}. The complimentary divergence case was proven by Beresnevich, Bernik, Kleinbock and Margulis \cite{BBKM02}. The complete inhomogeneous version was proven by Beresnevich, Badziahin and Velani \cite{BBV13}.
\item \textit{Hausdorff theory}: In terms of the Hausdorff dimension the upper and lower bounds for the Veronese curve were proven by Bernik \cite{Ber83} and Baker $\&$ Schmidt \cite{BakerSchmidt} respectively. A lower bound for extremal manifolds was proven by Dickinson and Dodson \cite{DD00} and the complimentary upper bound was proven by Beresnevich, Bernik, and Dodson \cite{BBD02}. The Hausdorff measure convergence case was proven by the first name author for $\VV_{2}$ \cite{Hus15} and was later generalised by Huang \cite{Hua18} to nondegenerate planar curves. Partial results for the convergence case have been proven in higher dimensions. For various classes of curves see \cite{HSS2}, for hypersurfaces with non-zero Hessian and dimension $d\geq 3$ see \cite{HSS1}, and for more general manifolds with further restrictions on the curvature, see \cite{HS3}. Note the latter two papers also provide results in the inhomogeneous setting. The homogeneous divergence case for nondegenerate manifolds was proven completely by Beresnevich, Dickinson, and Velani \cite{BDV06}. The inhomogeneous divergence case was proven in \cite{BBV13}.
\end{itemize}

\subsection{$p$-adic Diophantine approximation}

Much of the setup presented in the real setting can be transferred to $p$-adic space. Fix a prime $p$ and let $|\cdot|_{p}$ denote the $p$-adic norm, $\Qp$ the $p$-adic numbers and $\Zp$ the ring of $p$-adic integers, that is, $\Zp:=\{x\in \Qp : |x|_{p}=1\}$. 

For a multivariable approximation function $\Psi:\Z^{n+1} \to \R_{+}$ and fixed $\theta \in \Qp$ define the set of $p$-adic dually $(\Psi,\theta)$-approximable points as
\begin{equation*}
D_{n, p}^{\theta}(\Psi):=\left\{\xx=(x_1,\dots,x_n)\in\Q_{p}^n:\begin{array}{l}
|a_1x_1+\cdots+a_nx_n+a_0+\theta|_{p}<\Psi(\ba) \\[1ex]
\text{for infinitely many} \  \ba=(a_0, \ldots, a_n)\in\Z^{n+1}
\end{array}
\right\}.
\end{equation*}
Similar to the real case, for the homogeneous setting write $D_{n, p}^{0}(\Psi):=D_{n, p}(\Psi)$, and for $\Psi$ a univariable approximation function of the form $\Psi(\rr)=\psi(\|\rr\|)$ write $D_{n, p}^{\theta}(\Psi):=D_{n, p}^{\theta}(\psi)$. \par 
Note that, unlike the real setting, $\Psi$ depends on the $n+1$ integers $(a_{0},\dots, a_{n})$, including $a_0$. This is because $\Z$ is dense in $\Zp$ and so, if $a_{0}$ is unbounded from above, one could obtain increasingly precise approximations of $\xx$ for $(a_{1},\dots, a_{n})$ fixed, at least if $\xx\in\Z_p^n$ and $ \theta\in \Z_p$.  \par
The concept of Hausdorff dimension and more generally Hausdorff $f$-measure for any dimension function $f:\R_{+}\to \R_{+}$ as in the real case can be defined similarly over $\mathbb{Q}_{p}^{n}$ by coverings of balls with respect to the $p$-adic metric derived from $\| \cdot\|_{p}=\max|\cdot|_{p}$. We refer to \cite{Rogers} for properties of the Hausdorff measure and dimension in general metric space. Thus it makes sense to ask questions about the `size' of $D_{n}^{\theta}(\Psi)$ with respect to the $f$-dimensional Hausdorff measure $\HH^{f}$ for some dimension function $f$. When $f(r)=r^{n}$ this reduces to the size of $D_{n}^{\theta}(\Psi)$ in terms of the $n$-dimensional $p$-adic Haar measure $\mu_{p,n}$ up to some fixed constant.

In the homogeneous setting Mahler,  see for example \cite{Mah73}, proved for $\Psi(\ba)=\psi(\|\ba\|)=\|\ba\|^{-(n+1)}$ that $D_{n,p}(\Psi)=\Qp^{n}$, providing a $p$-adic equivalent of Dirichlet's Theorem. Again, by an application of the Borel Cantelli lemma we can deduce that the set of $p$-adic very well approximable points is a nullset, where the measure here is the $n$-dimensional $p$-adic Haar measure $\mu_{p,n}$. Lutz proved the $p$-adic analogue of Khintchine's Theorem \cite{Lut55}, and the Hausdorff measure analogue can be found in \cite[Theorem 16]{BDV06}.

\subsection{$p$-adic Diophantine approximation on manifolds and our main result}   \label{padon}

For $p$-adic approximation on dependent quantities, the following results are known. Again, we keep the statements of the known results brief and refer the reader to the relevant paper for more details. A key additional condition often required in the $p$-adic setting is the analyticity of the curve or manifold, see Section~\ref{prelim} for reasoning as to why. Here the extremality and ambient measure statements are with respect to $\mu_{p,n}$ and the induced $p$-adic Haar measure on a manifold $\MM$.
\begin{itemize}
\item \textit{Extremality}: Alongside the statement in the real setting, Sprindzhuk proved the $p$-adic equivalent of Mahler’s conjecture in the $p$-adic setting \cite{Spr69}. Kleinbock and Tomanov \cite{KT07}, used similar ideas to those in \cite{KM98} to prove extremality for all $C^{2}$ nondegenerate manifolds (see \cite{KT07} for the precise definition of a $C^{2}$ function in the $p$-adic setting). The inhomogeneous theory for the Veronese curve preceded that of Kleinbock and Tomanov and was proven by Bernik, Dickinson, and Yuan \cite{BDY99}.
\item \textit{Ambient measure}: The complete theory for the Veronese curve was proven by Beresnevich, Bernik and Kovalevskaya \cite{BBK05}. Preceding this, in \cite{BK03}, Beresnevich and Kovalevskaya proved the complete result for $p$-adic normal\footnote{see \cite{Mah73} for Mahler's definition of a $p$-adic normal function.} planar curves. In the inhomogeneous setting the convergence case was proven for the Veronese curve by Ustinov \cite{Ust05}. The convergence case for analytic nondegenerate manifolds was proven by Mohammadi and Salehi-Golsefidy \cite{MSg11} with the complementary divergence statement appearing soon after \cite{MSg12}. The inhomogeneous convergence statement was proven in \cite{DattaGhosh}.
\item \textit{Hausdorff theory}: 
In the special case of Veronese curves, the metric theory is rather
complete by results of Bernik and Morotskaya \cite{bemo, moro}.
For a general class of manifolds, 
the divergence statement in both the homogeneous and inhomogeneous setting has recently been proven by Datta and Ghosh \cite{DattaGhosh}, see Section~\ref{DatGho} for their result. In this paper, we contribute to the inhomogeneous convergence case. 
\end{itemize}

For a subset $Z\subseteq \Z^{n+1}$ define the set
\begin{equation*}
D_{n, p}^{\theta}(\Psi,Z):=\left\{\xx=(x_1,\dots,x_n)\in\Q_{p}^n:\begin{array}{l}
|a_1x_1+\cdots+a_nx_n+a_{0}+\theta|_{p}<\Psi(\ba) \\[1ex]
\text{for infinitely many}  \ (a_{0}, a_1, \ldots, a_n)\in Z
\end{array}
\right\}.
\end{equation*}

For $Z=\Z^{n+1}\backslash\{\0\}$ we write $D^{\theta}_{n,p}(\Psi,Z)=D^{\theta}_{n,p}(\Psi)$. Let $\MM\subset \Qp^{n}$ be a $p$-adic manifold with dimension $d$ defined by analytic map $\textbf{g}:\UU\subset \Qp^{d} \to \Qp^{n-d}$ via parametrization
$(\xx,\textbf{g}(\xx))$. Hereby analytic is defined
as follows.

\begin{definition}  \label{Defana}
	A function $\textbf{h}: U\subseteq \mathbb{Q}_{p}^{m}\to \mathbb{Q}_{p}^{n}$ 
	for $U$ open is analytic
	if every coordinate function can be written as a power series
	\[
	h_{j}(x_{1},\ldots,x_{m})= \sum_{\boldsymbol{t}} 
	a_{j,\boldsymbol{t}} x_{1}^{t_{1}} \cdots x_{m}^{t_{m}}, \qquad 1\leq j\leq n,
	\]
	converging in some $p$-adic ball $B(\yy,r), \  r=r(\yy)>0$ 
	around every $\yy$ contained in $U$.
\end{definition}

Note that any analytic function $\textbf{h} \in C^{\infty}(U)$. Compared to the real setting where $\textbf{g}\in C^2$ was sufficient, for technical
reasons we require the stronger condition of analytic manifolds in the $p$-adic setting.

By the Implicit Function Theorem (the $p$-adic version of this following from $\textbf{g}$ being analytic and the $p$-adic inverse function theorem, see Theorem~\ref{ift}) we may write
\begin{equation*}
\MM=\left\{ (x_{1}, \dots , x_{d}, g_{1}(\xx), \dots , g_{n-d}(\xx)) : \xx=(x_{1},\dots,x_{d}) \in \UU \subset \Qp^{d} \right\}
\end{equation*}
 for analytic functions $g_{i}:\UU\subset \Qp^{d} \to \Qp$, $1\leq i\leq n-d$. For ease of notation write $\textbf{g}=(g_{1},\dots, g_{n-d})$.\par

Analogously to~\cite{HSS1, HS3}, minding the slightly modified notation, we impose the following conditions :
\begin{itemize}
\item[(Ip)] Let $f$ be a dimension function satisfying 
\begin{equation}\label{dim_func_cond}
f(xy) \ll x^{s}f(y) \text{ for all } y<1<x
\end{equation}
for some $s<2(d-1)$. \\
\item[(IIp)] For each $1\leq i \leq n-d$, let $g_{i}: \UU \to \Qp$ be analytic on some open set $\UU \subset \Qp^{d}$ and suppose that for any rational integer vector $\zz=(z_1,\dots,z_{n-d}) \in \Z^{n-d}$ with $\Vert \zz\Vert_p=1$, the $d\times d$ matrix with $p$-adic entries
\begin{equation*}
M_{\zz}(\xx)=\left( \sum_{k=1}^{n-d}z_{k}\frac{\partial^{2} g_{k}(\xx)}{\partial x_{i} \partial x_{j}} \right)_{1\leq i, j \leq d}
\end{equation*}
has non-zero determinant for all $\xx \in \UU\setminus S_{\MM}(\zz)$, except possibly on a set $$S_{\MM}(\zz):=\{\xx\in\UU: M_{\zz}(\xx) \ \text{is singular}\}$$ with $\HH^{f}(S_{\MM}(\zz))=0$.
\end{itemize}

For an example of a dimension function $f$ satisfying (Ip) one can consider the function $f(r)=r^{s}$ for some $0<s<2(d-1)$. Note that
$s>d$ is not of interest as the entire manifold
$\MM$ has $\HH^{s}(\MM)=0$. 
Condition (Ip) is always true when $d\ge 3$
and $r^{-d}f(r)$ is decreasing (a general assumption in the GBSP statement), see~\cite[Section~1.2]{HSS1} for hypersurfaces,
which can readily be generalised as remarked in \cite{HS3}. \par 

Condition (IIp) has the most natural interpretation
when $\MM$ is a hypersurface. Then (IIp) is equivalent to asking for the Hessian of $\textbf{g}=g:\UU\subset \Qp^{n-1} \to \Qp$, denoted by $\nabla^{2}g$, to be singular only 
on a set of $\HH^{f}$ measure zero. That is,
\begin{equation} \label{hypersurface statement}
    \HH^{f}\left(\left\{ \xx \in \UU : \nabla^{2} g(\xx) \text{ is singular} \right\} \right)=0\, .
\end{equation}
 Combining with what was noticed above for the (Ip) statement, we have that (Ip) and (IIp) are satisfied for any hypersurface of dimension at least three (thus ambient
space has a dimension at least four) satisfying \eqref{hypersurface statement}. For $\MM$ not
a hypersurface, a detailed discussion on condition (IIp), in the real case, is provided 
in \cite{HS3}. The $p$-adic case is similar. In short, in \cite{HS3} 
some more classes of manifolds of codimension exceeding one are provided (the concrete examples found have codimension two or three), on the other hand, this condition induces 
some rigid restrictions on the dimension pairs $(n,d)$. Since $\textbf{g}$ is analytic, presumably
condition (IIp) can be simplified. Indeed,
then $\det M=\det M_{\zz}(\xx)$ as a function in $x_1,\ldots,x_d$ is a $\Q_p$-valued analytic map for any $\zz\in \Z^{n-d}$ as well. We expect that just like the real case (by Lojasiewicz's stratification theorem, see for example~\cite{loj}), the exceptional set $S_{\MM}(\zz)$ can have 
$p$-adic dimension at most $d-1$, unless $\det M_{\zz}$ is identically $0$. We have not found a proper reference for the $p$-adic version.
If true, this would imply that in (IIp) if
$f$ satisfies
\begin{equation}  \label{decfast}
    f(r) < r^{d-1+\varepsilon},\qquad \varepsilon>0,\; r\in (0,r_0),
\end{equation}
then we only need to assume that $\det M_{\zz}(\xx)$ is not the constant $0$ map for any non-zero rational integer vector $\zz$. 
Condition (Ip) for some $s>d-1$ may
be sufficient to guarantee \eqref{decfast}.

Consider the subsets of $\Z^{n+1}$ defined by
\begin{align*}
Z(1)&=\left\{ \ba=(a_{0}, \dots , a_{n}) \in \Z\times(\Z^{n}\backslash\{\0\}): \rm{gcd}(a_{i},a_{j},p)=1, \quad 0\leq i < j \leq n \right\}, \\
Z(2)&=\left\{ \ba=(a_{0}, \dots , a_{n}) \in \Z\times(\Z^{n}\backslash\{\0\}): \rm{gcd}(a_{0},\dots ,a_{n},p)=1 \right\}.
\end{align*}
That is the sets where $p$ divides at most one $a_i$ and not all $a_i$, respectively.
Notice that
\[
Z(1)\subseteq Z(2)\subseteq \Z \times (\Z^{n}\backslash\{\0\})\, .
\]

The reason that $Z(1)\subseteq Z(2)\subseteq \Z\times(\Z^{n}\backslash\{\0\})$ rather than $Z(1)\subseteq Z(2) \subseteq \Z^{n+1}\backslash\{\0\}$ is to ensure we are not just approximating the inhomogenity $\theta$ by elements of $\Z$, which is not our focus in this paper.  

We prove the following. 

\begin{theorem} \label{dual_hypersurface_inhomogeneous}
Let $f$ be a dimension function satisfying (Ip) and $\MM=\{(\xx,\textbf{g}(\xx)): \xx \in \Qp^{d}\}$ be a manifold of dimension $d$ defined by analytic $\textbf{g}:\UU \subset \Qp^{d} \to \Qp^{n-d}$ satisfying (IIp). Let $\theta \in \Zp$ and $\Psi:\Z^{n+1} \to \R_{+}$ be a multivariable approximation function with $\Psi(\ba)<\|\ba\|^{-1}$. Then
\begin{enumerate}
\item[\rm (i)] $\HH^{f}(D_{n,p}^{\theta}(\Psi,Z(1)) \cap \MM) =0$ if
\begin{equation*}
\sum_{\ba \in Z(1)}\Psi(\ba)^{-(d-1)}f(\Psi(\ba))<\infty. 
\end{equation*}
\item[\rm (ii)]  $\HH^{f}(D_{n,p}^{\theta}(\Psi,Z(2)) \cap \MM) =0$ if $|\theta|_p\neq1$ and
\begin{equation*}
\sum_{\ba \in Z(2)}\Psi(\ba)^{-(d-1)}f(\Psi(\ba))<\infty\,.
\end{equation*}
\item[\rm (iii)]  $\HH^{f}(D_{n,p}(\Psi) \cap \MM) =0$ if
\begin{equation*}
\sum_{\ba \in \Z^{n+1}\backslash\{ \0\}}\Psi(\ba)^{-(d-1)}f(\Psi(\ba))<\infty \, , \end{equation*} 
and
 \begin{equation} \label{lasten}
 p\Psi(\ba)\le \Psi(p^{-1}\ba) \qquad \, \, \forall \, \, \ba \in p\Z^{n+1}\backslash \{ \0\}.  
\end{equation} 
\end{enumerate}  
\end{theorem}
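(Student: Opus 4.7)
The plan is to apply the Hausdorff-Cantelli lemma, paralleling the strategy used for the real analogues in \cite{HSS1, HS3} with appropriate $p$-adic modifications. I sketch (i) in some detail; (ii) and (iii) follow by essentially the same method after verifying that the Hessian rescaling step goes through under the coprimality constraints defining $Z(2)$ and $\Z^{n+1}\setminus\{\0\}$.

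For $\ba=(a_0,\ldots,a_n)\in Z(1)$, define
\[
F_{\ba,\theta}(\xx)=a_0+\theta+\sum_{i=1}^{d}a_i x_i+\sum_{k=1}^{n-d}a_{d+k}g_k(\xx),
\]
so that $D_{n,p}^\theta(\Psi,Z(1))\cap\MM$ equals the limsup over $\ba\in Z(1)$ of the resonant sets $R(\ba):=\{\xx\in\UU:|F_{\ba,\theta}(\xx)|_p<\Psi(\ba)\}$. By Hausdorff-Cantelli, it suffices to cover each $R(\ba)$ by $p$-adic balls of total $f$-volume $\lesssim\Psi(\ba)^{-(d-1)}f(\Psi(\ba))$ with constant independent of $\ba$; the convergence hypothesis then gives the result.

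To build the cover on a fixed small ball $B_0=B(\xx_0,r_0)\subset\UU$, I use the Taylor expansion
\[
F_{\ba,\theta}(\xx_0+\uu)=F_{\ba,\theta}(\xx_0)+\nabla F_{\ba,\theta}(\xx_0)\cdot\uu+\tfrac12 p^{\ell}\uu^{\top}M_{\zz}(\xx_0)\uu+O(\|\uu\|_p^3),
\]
where $\zz:=(a_{d+1},\ldots,a_n)/p^{\ell}$ and $\ell=v_p(\gcd(a_{d+1},\ldots,a_n))$, so $\|\zz\|_p=1$ and (IIp) applies. The countable union $\bigcup_{\zz}S_\MM(\zz)$ is $\HH^f$-null and may be discarded, leaving $M_{\zz}(\xx_0)$ non-singular. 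Split $B_0$ by a threshold $\eta$ into two regimes. In the \emph{large-gradient} regime $|\nabla F_{\ba,\theta}(\xx_0)|_p\ge\eta$, the $p$-adic implicit function theorem (Theorem~\ref{ift}) shows that $R(\ba)\cap B_0$ lies in a tube of transverse thickness $\Psi(\ba)/\eta$ around a smooth $(d-1)$-dimensional slice, covered by $\lesssim (r_0\eta/\Psi(\ba))^{d-1}$ balls of radius $\Psi(\ba)/\eta$. In the \emph{small-gradient} regime $|\nabla F_{\ba,\theta}(\xx_0)|_p<\eta$, the point $\xx_0$ lies near a critical point of $F_{\ba,\theta}$; the non-singular quadratic term dominates the cubic remainder (the assumption $\Psi(\ba)<\|\ba\|^{-1}$ together with analyticity of $\textbf{g}$ ensures the Taylor series converges and higher-order terms are negligible), and a $p$-adic Morse-type enclosure traps $R(\ba)\cap B_0$ in boundedly many balls of radius $\lesssim\sqrt{\Psi(\ba)}$, up to controlled factors in $|p^\ell\det M_{\zz}(\xx_0)|_p$. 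Summing $f$-volumes via the growth bound in (Ip), optimising $\eta$, and totalling over the balls $B_0$ tiling $\UU$ yields the required estimate $\lesssim\Psi(\ba)^{-(d-1)}f(\Psi(\ba))$.

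For (ii) and (iii) the outline is identical with $Z(1)$ replaced by $Z(2)$ or $\Z^{n+1}\setminus\{\0\}$; the hypothesis $|\theta|_p\ne 1$ in (ii) prevents $p$-divisibility of $a_0+\theta$ from upsetting the Taylor normalisation when only the global coprimality $\gcd(a_0,\ldots,a_n,p)=1$ is imposed, while the monotonicity condition \eqref{lasten} in (iii) permits grouping each $\ba\in p\Z^{n+1}\setminus\{\0\}$ with its primitive rescaling and reducing the sum to one over $Z(2)$ with controlled overhead. \textbf{The main obstacle} is the quantitative small-gradient analysis: keeping the $p$-adic Morse estimate uniform over all $\ba$ while avoiding loss of control near the singular locus $S_\MM(\zz)$, and balancing this contribution against the large-gradient regime with a dimension function satisfying only the weakened bound $s<2(d-1)$ rather than the easier $s<d-1$. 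The Hessian non-singularity granted by (IIp), combined with the exponent range in (Ip), are precisely what make the balance between the two regimes work.
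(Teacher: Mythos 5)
Your overall strategy (Hausdorff--Cantelli, apply (IIp) after rescaling to a primitive $\zz$, split into a large-gradient and a small-gradient regime) matches the paper's, but there is a concrete gap in the small-gradient analysis that cannot be repaired by optimising a threshold.

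The claim that near a non-degenerate critical point $\vv$ the resonant set $R(\ba)\cap B_0$ is ``trapped in boundedly many balls of radius $\lesssim\sqrt{\Psi(\ba)}$'' is false. A non-degenerate $p$-adic quadratic form is generically isotropic, so its sublevel set is a $(d-1)$-dimensional neighbourhood that extends to the boundary of $B_0$, not a small ball. Take $Q(x,y)=xy$ on $\Zp^2$: then $\{|Q|_p<\Psi\}$ contains the full tube $\{|x|_p<\Psi\}\times\Zp$, which requires $\asymp\Psi^{-1}$ balls of radius $\Psi$ (and cannot be covered by $O(1)$ balls of radius $\sqrt\Psi$). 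What actually works, and is what the paper does, is to decompose $K\setminus\{\vv\}$ into $p$-adic annuli $A_k=B(\vv,p^{-k})\setminus B(\vv,p^{-(k+1)})$, observe that $\|\nabla h_{\ba}\|_p\asymp p^{-k}$ on $A_k$ (this is the $p$-adic mean-value estimate for the analytic gradient), and then apply the large-gradient covering lemma (the paper's Lemma~\ref{ll}) on each annulus with $\alpha\asymp p^{-k}$, $\delta\asymp p^{k}\Psi(\ba)/\widetilde{a_2}$. Each annulus is covered by $\asymp p^{-2k(d-1)}\widetilde{a_2}^{\,d-1}\Psi(\ba)^{-(d-1)}$ balls of radius $p^k\Psi(\ba)/\widetilde{a_2}$, and the resulting sum over $k$ is geometric with ratio $p^{s-2(d-1)}$, converging exactly because $s<2(d-1)$. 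So (Ip) is used not for a ``Morse ball'' bound but to sum over the annular scales; your intuition about the role of $s<2(d-1)$ is right, but the single-scale enclosure you wrote down would not produce the required $\|(\ba_1,\ba_2)\|_p^{(d-1)-s}\Psi(\ba)^{-(d-1)}f(\Psi(\ba))$ bound.

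Two further points that are glossed over. First, the exponent $\|(\ba_1,\ba_2)\|_p^{(d-1)-s}$ in the key covering lemma is essential, and the whole distinction between $Z(1)$, $Z(2)$ and $\Z^{n+1}\setminus\{\0\}$ is about controlling this factor; on $Z(1)$ it is identically $1$, whereas for $Z(2)$ and for the unrestricted case one must prove an absolute lower bound $\|(\ba_1,\ba_2)\|_p\geq p^{-\ell}$ so that the contribution reduces to a finite sum. Second, the role of $|\theta|_p\neq 1$ in (ii) is not a Taylor-normalisation issue: it is an ultrametric estimate. When $|a_0|_p=1$ and $|\theta|_p\neq1$, the strong triangle inequality gives $|a_0+\theta|_p=1$, so if $\|(\ba_1,\ba_2)\|_p$ were too small then $|\ba_1\cdot\xx+\ba_2\cdot\textbf{g}(\xx)+a_0+\theta|_p=1>\Psi(\ba)$ uniformly on $K$ and the resonant set is empty. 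If $|\theta|_p=1$ this equality can fail and the lower bound on $\|(\ba_1,\ba_2)\|_p$ is lost. Similarly, in (iii) the monotonicity condition $p\Psi(\ba)\le\Psi(p^{-1}\ba)$ is used to show $S(\ba)\subseteq S(p^{-1}\ba)$ whenever $\|(\ba_1,\ba_2)\|_p$ is too small, so that in the limsup one may again restrict to $\|(\ba_1,\ba_2)\|_p\ge p^{-\ell}$; this is an inclusion of resonant sets, not a mere rearrangement of the sum.
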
  


The limitation $\Psi(\ba)<\|\ba\|^{-1}$ is not too restrictive. Indeed, if $\Psi(\ba)>\|\ba\|^{-1}> \|\ba\|^{-n}$ for all sufficiently large $\|\ba\|$, then by Dirichlet's Theorem $D_{n,p}(\Psi)=\Qp^{n}$.
According to (i), restricting the integer vectors
to the smallest set $Z(1)$,
we have the perfect convergence claim as predicted by the GBSP. In the classical homogeneous setting,
the same is true when summing over $Z(2)$, by (ii).
The additional condition on $\theta$ in (ii) is an artifact of our method, we do not have a satisfactory explanation for its occurrence.
Claim (iii) is for the homogeneous setting only.
Note that \eqref{lasten} is 
clearly satisfied for the standard power functions
$\Psi(\ba)=\Vert \ba\Vert^{-\tau}, \tau\ge n$,
and more generally for any function $\Psi$ 
with the property that
$-\log \Psi(\ba)/\Vert \ba\Vert$ is
non-decreasing. \par

\subsection{Corollaries to Theorem~\ref{dual_hypersurface_inhomogeneous} combined with results of Datta and Ghosh } \label{DatGho}

The following result is a special case of more general
results from~\cite{DattaGhosh}, for $s$-dimensional Hausdorff measure.

\begin{theorem}[Datta and Ghosh 2022]  \label{dago}
Suppose that $\textbf{g}:\UU \subset \Qp^{d}\to\Qp^{n}$ and satisfies
\begin{enumerate}
\item[\rm (I)] $\textbf{g}$ is an analytic map and can be extended to the boundary of $\UU\subset\Qp^{d}$ an open ball,
\item[\rm (II)] Assume that $1,x_{1}, \dots , x_{d}, g_{1}(\xx), \dots, g_{n-d}(\xx)$ are linearly independent functions over $\Qp$ on any open subset of $\UU\ni \xx$,
\item[\rm (III)] Assume that $\|\textbf{g}(\xx)\|_{p}\leq 1$ and $\|\nabla \textbf{g}(\xx)\|_{p}\leq 1$ for any $\xx\in \UU$, and that the second order difference quotient\footnote{ See \cite[Section 5]{DattaGhosh} or \cite{KT07} for the definition of second order difference quotients.} is bounded from above by $\frac{1}{2}$ for any multi-index and any triplets $\yy_{1},\yy_{2},\yy_{3} \in \UU$.
\end{enumerate}
Let
\begin{equation*}
\Phi(\ba)=\phi(\|\ba\|), \quad \forall \ba \in \Z^{n+1}
\end{equation*}
for $\phi:\N \to \R_{+}$ a positive non-increasing function, and assume that $s>d-1$. Then
\begin{equation*}
\HH^{s}(D_{n,p}(\Phi)\cap \MM)=\HH^{s}(\MM) \quad \text{ if } \quad \sum_{\ba\in\Z^{n+1}\backslash\{0\}}
\Phi(\ba)^{s+1-d}=\infty.
\end{equation*}
\end{theorem}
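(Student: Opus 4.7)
Since this is a divergence-type statement for the Hausdorff $s$-measure of a lim sup set on a $p$-adic analytic manifold, the natural strategy is the $p$-adic ubiquity framework, the adaptation of the Beresnevich--Dickinson--Velani machinery to $\Qp$. The plan is to reduce to the parameter domain $\UU \subset \Qp^{d}$ via the parametrization $\xx \mapsto (\xx,\textbf{g}(\xx))$, set up suitable resonant sets in $\UU$, verify a $p$-adic ubiquity estimate, and then invoke the general ubiquity-to-Hausdorff-measure theorem.

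For each $\ba = (a_{0},a_{1},\ldots,a_{n}) \in \Z^{n+1} \setminus \{\0\}$, I would define the resonant set
\[
R(\ba) := \bigl\{ \xx \in \UU : a_{0} + a_{1}x_{1} + \cdots + a_{d}x_{d} + a_{d+1}g_{1}(\xx) + \cdots + a_{n}g_{n-d}(\xx) = 0 \bigr\}.
\]
Assumption (II) guarantees that the defining analytic function on $\UU$ is not identically zero, so generically $R(\ba)$ is a $(d-1)$-dimensional analytic subvariety. The pullback of $D_{n,p}(\Phi)\cap \MM$ under the parametrization is then the lim sup of the $\Phi(\ba)$-neighborhoods of the $R(\ba)$, and it suffices to show this pullback has full $\HH^{s}$-measure in $\UU$.

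The technical heart of the argument is a $p$-adic ubiquity estimate: one must produce a decreasing function $\rho: \N \to \R_{+}$ with $\rho(Q)\to 0$ and a constant $c>0$ such that, for all sufficiently large $Q$, the union of the $\rho(Q)$-neighborhoods of the $R(\ba)$ with $\Vert\ba\Vert \le Q$ covers a fixed positive proportion of $\UU$ in $\HH^{s}$-measure. I would establish this by (i) Taylor expanding $\textbf{g}$ and using the second-order difference quotient bound in (III) to linearize the analytic equation defining $R(\ba)$ on small patches, (ii) applying a $p$-adic Dirichlet pigeonhole at each $\xx \in \UU$ to produce an integer vector $\ba$ of norm at most $Q$ whose linear form vanishes $p$-adically up to a controlled power of $Q$, and (iii) using the analytic extension of $\textbf{g}$ to the boundary in (I) to keep all constants uniform on $\UU$. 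Once ubiquity at the correct rate is in place, the $p$-adic divergence-side ubiquity theorem converts the assumed divergence of the weighted series into $\HH^{s}(D_{n,p}(\Phi)\cap\MM) = \HH^{s}(\MM)$. The exponent $s+1-d$ arises from the volume count: a $\Phi(\ba)$-neighborhood of a $(d-1)$-dimensional resonant set carries $\HH^{s}$-mass $\asymp \Phi(\ba)^{s-(d-1)}$, there are $\asymp Q^{d}$ genuine resonant sets with $\Vert\ba\Vert \asymp Q$, and reindexing via $\Phi(\ba) = \phi(\Vert\ba\Vert)$ recovers $\sum_{\ba}\Phi(\ba)^{s+1-d}$.

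The principal obstacle is the ubiquity estimate itself. Counting integer points whose resonant hypersurface passes close to a given $\xx \in \UU$ in the $p$-adic analytic setting is delicate: the linear independence assumption (II) is precisely what prevents distinct $\ba$ from producing coincident resonant sheets (including spurious coincidences modulo high powers of $p$), the quantitative smoothness bounds in (III) are what keep the linearization error terms uniform as $\Vert\ba\Vert$ grows, and the analytic extension to the boundary in (I) is what allows the whole argument to run with constants independent of $\xx$. All three assumptions enter essentially at this step, and loosening any one of them would break the ubiquity estimate at the required rate.
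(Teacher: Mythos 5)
The paper does not prove Theorem~\ref{dago}; it is cited verbatim from Datta and Ghosh~\cite{DattaGhosh} and used as a black box (see the proof of Theorem~\ref{cor1}, where it supplies the divergence half of the zero--full dichotomy). There is therefore no internal proof in the paper to compare your proposal against, and you were effectively asked to reconstruct a theorem from another paper.

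That said, your outline does correctly identify the broad strategy Datta and Ghosh follow: reduce to the parameter domain $\UU\subset\Qp^{d}$, view the limsup set as a limsup of shrinking neighborhoods of resonant varieties $R(\ba)$, establish a ubiquity statement for this system, and then invoke the general divergence-side ubiquity-to-Hausdorff-measure machinery in the $S$-arithmetic setting. Hypotheses (I)--(III) in the statement are essentially the conditions under which Datta--Ghosh can run the $p$-adic analogue of the Beresnevich--Dickinson--Velani framework, and your reading of their roles (uniform constants via the boundary extension, nondegeneracy via linear independence, quantitative $C^{2}$-type control via the difference-quotient bound) is in the right spirit. However, as you concede, your proposal stops precisely at the hard part: you assert but do not establish the ubiquity estimate, and it is that estimate --- a quantitative covering statement, ultimately resting on nondivergence/quantitative recurrence results of Kleinbock--Tomanov type combined with a $p$-adic simplex lemma --- that is the genuine mathematical content of the theorem. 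One small inaccuracy in your heuristic bookkeeping: the number of resonant sets with $\Vert\ba\Vert\asymp Q$ is $\asymp Q^{n}$ (one free choice per coordinate of $(a_{1},\dots,a_{n})$, with $a_{0}$ then essentially determined on the compact domain), not $\asymp Q^{d}$; the factor $Q^{n}$, not $Q^{d}$, is what recovers $\sum_{\ba\in\Z^{n+1}\setminus\{\0\}}\Phi(\ba)^{s+1-d}$ from the volume count. So your proposal is a plausible roadmap but not a proof, and it is not an argument appearing in the present paper.
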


\begin{remark} \rm
Note that in \cite{DattaGhosh} the set
\begin{equation*}
W^{\textbf{g}}_{\Phi,\theta}=\{\xx \in \UU: (\xx,\textbf{g}(\xx)) \in D_{n,p}^{\theta}(\Phi) \}
\end{equation*}
is considered. Since $\nabla \textbf{g}$ is bounded on $\UU$ there exists a bi-Lipschitz map between the two sets $W^{\textbf{g}}_{\Phi,\theta}$ and $D_{n,p}^{\theta}(\Phi)\cap \MM$ (or rather their complements) and so full measure results are equivalent. See the start of Section~\ref{complete} for further details of such equivalence.
\end{remark}

\begin{remark}   \label{dattago}
	The divergence case in the inhomogeneous setting
	is rather general. In particular, they prove the result for some general analytic function $\mathbf{\Theta}:\UU \to \Zp$ satisfying certain conditions, see \cite[Condition (I5)]{DattaGhosh} for more details. In Theorem~\ref{dago} and our application below, since we relate to~Theorem~\ref{dual_hypersurface_inhomogeneous} (iii), we consider the homogeneous setting $\mathbf{\Theta}(\xx)=0$.
\end{remark}

Combining our convergence result with Theorem~\ref{dago}, we have the following statement for the homogeneous case and $s$-dimensional Hausdorff measure.

\begin{theorem} \label{cor1}
Let $f(r)=r^{s}$ be a dimension function with
\begin{equation*}
d-1<s<2(d-1).
\end{equation*}
Let $\textbf{g}:\UU\subset \Qp^{d} \to \Qp^{n}$ be an analytic map satisfying (IIp), (I), (II), and (III). Let
\begin{equation}  \label{psi-form}
\Psi(\ba)=\frac{\psi(\|\ba\|)}{\|\ba\|}
\end{equation}
for a monotonic decreasing function $\psi:\N\to\R_{+}$ tending to zero.
Then
\begin{equation*}
\HH^{s}(D_{n,p}(\Psi)\cap \MM)=\begin{cases}
0 & \text{ if }\quad \sum\limits_{\ba \in \Z^{n+1}} \Psi(\|\ba\|)^{s+1-d}<\infty, \\[2ex]
\HH^{s}(\MM)& \text{ if }\quad  \sum\limits_{\ba \in \Z^{n+1}} \Psi(\|\ba\|)^{s+1-d}=\infty.
\end{cases}
\end{equation*}
\end{theorem}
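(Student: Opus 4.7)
The strategy is simply to verify that the hypotheses of Theorem~\ref{dual_hypersurface_inhomogeneous}(iii) and of Theorem~\ref{dago} each hold for the $\Psi$, $f$ and $\mathbf{g}$ fixed in the statement; the convergence half will then come from Theorem~\ref{dual_hypersurface_inhomogeneous}(iii) and the divergence half from Theorem~\ref{dago}.

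For the convergence half, I would first check condition (Ip) for the dimension function $f(r)=r^s$. Since $f(xy)=x^s y^s = x^s f(y)$, (Ip) holds with the same exponent $s$, and the assumption $s<2(d-1)$ is the required bound. Condition (IIp) on $\mathbf{g}$ is assumed. Next I would verify $\Psi(\ba)<\|\ba\|^{-1}$ for all sufficiently large $\|\ba\|$: this follows immediately from the form $\Psi(\ba)=\psi(\|\ba\|)/\|\ba\|$ and the fact that $\psi(r)\to 0$ (vectors $\ba$ with small norm contribute only finitely many terms and may be discarded without affecting the convergence of the series). The monotonicity condition \eqref{lasten} is the only mild computation: writing $r=\|\ba\|$ for $\ba\in p\Z^{n+1}\setminus\{\0\}$, one has $\|p^{-1}\ba\|=r/p$ and
\begin{equation*}
\Psi(p^{-1}\ba)\;=\;\frac{\psi(r/p)}{r/p}\;=\;p\cdot\frac{\psi(r/p)}{r}\;\geq\;p\cdot\frac{\psi(r)}{r}\;=\;p\,\Psi(\ba),
\end{equation*}
since $\psi$ is non-increasing. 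Finally, the exponents match: $\Psi(\ba)^{-(d-1)}f(\Psi(\ba))=\Psi(\ba)^{s-(d-1)}=\Psi(\ba)^{s+1-d}$, so finiteness of $\sum \Psi(\|\ba\|)^{s+1-d}$ is exactly the hypothesis of Theorem~\ref{dual_hypersurface_inhomogeneous}(iii), and we conclude $\HH^s(D_{n,p}(\Psi)\cap\MM)=0$.

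For the divergence half, I would apply Theorem~\ref{dago} with $\Phi=\Psi$ and $\phi(r)=\psi(r)/r$. Since $\psi$ is positive and monotonically decreasing, and $1/r$ is also positive and decreasing, $\phi$ is a positive non-increasing function tending to zero, which is exactly the shape required by Theorem~\ref{dago}. The hypotheses (I), (II), (III) on $\mathbf{g}$ are assumed, and the lower bound $s>d-1$ is part of the theorem statement. With the exponent identity $\Phi(\ba)^{s+1-d}=\Psi(\|\ba\|)^{s+1-d}$ as above, divergence of the series yields $\HH^s(D_{n,p}(\Psi)\cap\MM)=\HH^s(\MM)$.

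Thus the proof is essentially a matter of bookkeeping, and no genuinely hard step is involved. The only mildly delicate point is the verification of \eqref{lasten}, which requires observing that the factor $1/\|\ba\|$ built into $\Psi$ cooperates correctly with the $p^{-1}$-scaling so that the $\psi$-monotonicity suffices; had $\Psi$ been defined directly as $\psi(\|\ba\|)$ rather than $\psi(\|\ba\|)/\|\ba\|$, the inequality \eqref{lasten} would fail to be automatic. This is the minor price paid for stating the result in the clean Khintchine-type normalization \eqref{psi-form}.
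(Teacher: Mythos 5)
Your proposal is correct and follows essentially the same approach as the paper's proof: both halves are obtained by specialising Theorem~\ref{dual_hypersurface_inhomogeneous}(iii) and Theorem~\ref{dago} respectively, with the decreasing monotonicity of $\psi$ used to verify \eqref{lasten} in exactly the way the paper does. You spell out a few checks the paper treats as implicit — the verification of (Ip) for $f(r)=r^s$, the exponent match $\Psi(\ba)^{-(d-1)}f(\Psi(\ba))=\Psi(\ba)^{s+1-d}$, and the tail argument giving $\Psi(\ba)<\|\ba\|^{-1}$ — but these are routine and do not change the substance.
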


\begin{proof}
The lower bound on $s$ is due to Theorem~\ref{dago}, and the upper bound is due to (iii) in Theorem~\ref{dual_hypersurface_inhomogeneous}. The conditions on $\textbf{g}$ are a combination of requirements from both theorems. 
We identify
$\phi(\Vert\ba\Vert)= \psi(\Vert\ba\Vert)/\Vert \ba\Vert$ so that $\Phi(\ba)= \Psi(\ba)$, noting that $\phi$ is clearly decreasing as well.
Thus on the one hand we may apply Theorem~\ref{dago}, on the other hand, we may 
deduce that
\begin{equation*}
\Psi(\ba)=\frac{\psi(\|\ba\|)}{\|\ba\|} =\frac{\psi(\|\ba\|)}{p\|p^{-1}\ba\|}\leq \frac{\psi(\|p^{-1}\ba\|)}{p\|p^{-1}\ba\|}=p^{-1}\Psi(p^{-1}\ba),
\end{equation*}
and so Theorem~\ref{dual_hypersurface_inhomogeneous} (iii) is applicable.
\end{proof}

Since $\HH^{s}(\MM)=0$
for $s>d$, if $d\ge 2$ the theorem covers the whole interesting range $(d-1,d]$ for $s$, apart from the endpoint $s=d=2$ when $d=2$.
Since \eqref{decfast} is satisfied for $f$
in Theorem~\ref{cor1}, we expect that (IIp) can be relaxed to impose that $\det M(\xx)=\det M_{\zz}(\xx)$ is not constant $0$, for any choice of $\zz$.

\begin{remark}\rm
By the above remarks on (Ip), (IIp) in Section~\ref{padon},
the above result gives us a zero-full dichotomy for homogeneous dual approximation on sufficiently curved hypersurfaces of $n$-dimensional $p$-adic space with $n\geq 3$. 
Note that individually each result (the convergence and divergence case) extends beyond the above theorem. For example, \cite[Theorem 1.1]{DattaGhosh} allows for the inhomogeneous setting, but does not allow for multivariable approximation. Additionally, we have restricted the range of admissible approximation functions to be those of the form \eqref{psi-form}. Note that such a form of approximation function is permissible in applying Theorems \ref{dual_hypersurface_inhomogeneous}-1.5.
\end{remark}

Denoting by $\dim_\HH$ the Hausdorff dimension,
we deduce the following immediate corollary.

\begin{corollary}\label{cor2}
Let $\textbf{g}$ be as in Theorem~\ref{cor1} and $d\geq 2$. Suppose that
\begin{equation*}
\Psi(\ba)=\|\ba\|^{-1-\tau}
\end{equation*}
for some $\tau> n$. Then
\begin{equation*}
\dim_\HH(D_{n,p}(\Psi)\cap \MM) = d -1 + \frac{n+1}{1+\tau}.
\end{equation*}
\end{corollary}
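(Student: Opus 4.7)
The plan is to derive Corollary~\ref{cor2} as a direct application of the zero-infinity dichotomy in Theorem~\ref{cor1}, by identifying the critical Hausdorff exponent where the series on the right-hand side transitions between convergence and divergence.

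First I would put $\Psi$ in the normalized form required by Theorem~\ref{cor1}. Writing $\Psi(\ba)=\psi(\|\ba\|)/\|\ba\|$ forces the choice $\psi(r)=r^{-\tau}$, and since $\tau>n\geq 1$ this $\psi$ is monotonic decreasing and tends to zero, so the hypotheses on $\psi$ in Theorem~\ref{cor1} are met. Next I would substitute into the dichotomy sum and reduce it to a one-dimensional series by counting lattice points in shells $\{\ba\in\Z^{n+1}:\|\ba\|=N\}$, which contribute $\asymp N^{n}$ vectors. This gives
\begin{equation*}
\sum_{\ba\in\Z^{n+1}\setminus\{\0\}}\Psi(\ba)^{s+1-d}\;\asymp\;\sum_{N=1}^{\infty} N^{\,n-(1+\tau)(s+1-d)},
\end{equation*}
which converges precisely when $(1+\tau)(s+1-d)>n+1$, i.e.\ when $s>s_{0}:=d-1+(n+1)/(1+\tau)$.

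The remaining routine step is to verify that $s_{0}$ lies in the admissible range $(d-1,2(d-1))$ of Theorem~\ref{cor1}. The lower bound $s_{0}>d-1$ is immediate from $\tau<\infty$; the upper bound follows because $\tau>n$ forces $(n+1)/(1+\tau)<1$, so $s_{0}<d\leq 2(d-1)$ for $d\geq 2$. This opens a window on both sides of $s_{0}$ in which Theorem~\ref{cor1} is applicable.

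For the upper bound on $\dim_{\HH}$, I would pick any $s\in(s_{0},d)$; the series converges, so Theorem~\ref{cor1} gives $\HH^{s}(D_{n,p}(\Psi)\cap\MM)=0$, whence $\dim_{\HH}(D_{n,p}(\Psi)\cap\MM)\leq s_{0}$ by letting $s\searrow s_{0}$. For the lower bound, I would pick any $s\in(d-1,s_{0})$; the series diverges, so Theorem~\ref{cor1} yields $\HH^{s}(D_{n,p}(\Psi)\cap\MM)=\HH^{s}(\MM)$, which is positive because $s<d=\dim\MM$. Hence $\dim_{\HH}(D_{n,p}(\Psi)\cap\MM)\geq s_{0}$, and letting $s\nearrow s_{0}$ yields equality. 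There is no genuine obstacle here; the only point that needs a little care is checking that $s_{0}$ falls inside the interval $(d-1,2(d-1))$ so that one is allowed to invoke Theorem~\ref{cor1} both above and below the critical exponent, and this is where the hypothesis $d\geq 2$ (together with $\tau>n$) is used.
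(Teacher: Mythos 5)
Your proposal is correct and follows essentially the same route as the paper: write $\Psi$ in the form $\psi(\|\ba\|)/\|\ba\|$ with $\psi(r)=r^{-\tau}$, reduce the dichotomy sum over $\Z^{n+1}$ to a single series over shells $\|\ba\|=N$, locate the critical exponent $s_0=d-1+(n+1)/(1+\tau)$, verify $s_0\in(d-1,2(d-1))$, and squeeze the Hausdorff dimension from both sides via Theorem~\ref{cor1}. The only cosmetic difference is in how the bound $s_0<2(d-1)$ is justified (the paper goes through the inequality $n\geq\frac{n+1}{d-1}-1$, while you note directly that $\tau>n$ gives $(n+1)/(1+\tau)<1$, hence $s_0<d\leq 2(d-1)$), and you spell out the limiting argument for the dimension in more detail where the paper leaves it implicit; both are correct.
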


\begin{proof} 
We first check that all the requirements of Theorem~\ref{cor1} are satisfied. Note that $s=d-1+
\frac{n+1}{\tau+1}<2(d-1)$ whenever $\tau>\frac{n+1}{d-1}-1$, and that $n\geq \frac{n+1}{d-1}-1$ for all $d\geq 2$,
hence our assumption $\tau>n$ implies $s<2(d-1)$. The remaining 
conditions are obvious.
So it remains to find when the critical sum converges/diverges. Note that
\begin{equation*}
\sum_{\ba\in\Z^{n+1}}\|\ba\|^{-(1+\tau)(s+1-d)}= \sum_{r=1}^{\infty}\sum_{\ba \in \Z^{n+1}:\; \\ \|\ba\|=r}r^{-(1+\tau)(s+1-d)}\asymp\sum_{r=1}^{\infty}r^{n-(1+\tau)(s+1-d)}
\end{equation*}
and the summation on the right hand side converges for $s>d-1+\frac{n+1}{1+\tau}$ and diverges when $s\leq d-1+ \frac{n+1}{1+\tau}$. 
\end{proof}

\begin{remark} \rm
Note trivially for $\tau\leq n$ we have that $D_{n,p}(\Psi)=\Qp^{n}$ by the $p$-adic version of Dirichlet's Theorem, and so $\dim_\HH(D_{n,p}(\Psi)\cap \MM)=\dim_\HH \MM =d$.
\end{remark}

\begin{remark}
    Note that in contrast to~\cite{HSS1} where $d
\ge 3$ is required, we get
    a claim for $d=2$ here. The reason is that
    we assume strict inequality $\tau>n$, so
    that the parameter range for $s$ satisfying
    (Ip) can be improved. Any such improvement
    leads to the implementation of the case $d=2$.
\end{remark}

\noindent{\bf Acknowledgments:} The research of Mumtaz Hussain and Ben Ward is supported by the Australian Research Council discovery project 200100994. Part of this work was done when Johannes visited La Trobe University, we thank the Sydney Mathematics Research Institute and La Trobe University for the financial support.

\section{ Preliminaries and the main lemma}

\subsection{Preliminaries} \label{prelim}

We first note that for self-mappings (that is $m=n$) of analytic functions as in the Definition~\ref{Defana}, the inverse function theorem holds \cite[Section 9 p. 113]{Serre}. The claim is formulated more generally for any ultrametric space.
We notice that as explained in \cite{Serre} the notion of analyticity implies infinite differentiability in a strong sense. Hereby we call a function $\phi: U\subseteq \mathbb{Q}_{p}^{m}\to \mathbb{Q}_{p}^{n}$ 
strong differentiable at $\mbf{a}$ if there is a linear function $L: \mathbb{Q}_{p}^{m}\to \mathbb{Q}_{p}^{n}$ such that
\begin{equation} \label{eq:new}
\lim_{|\hh|_{p}\to 0, \hh\neq 0} \frac{\|\phi(\mbf{a}+\hh)-\phi(\mbf{a})-L\hh\|_{p}}{\|\hh\|_{p}}=0. 
\end{equation}
In the case $m=n=1$, this is equivalent to the existence of the limit
\[
\lim_{(x,y)\to (a,a), x\neq y} \frac{|\phi(x)-\phi(y)|_{p}}{|x-y|_{p}}.
\]
Such a derivative is uniquely determined (if it exists) and
we denote it $L\mbf{a}=D\phi(\mbf{a})$. Notice
that in the $p$-adic setting this is indeed stronger than the typical
notion of differentiability where $L\mbf{a}$ is involved instead of $L\hh$
in the numerator of \eqref{eq:new}, 
and one has to be careful about transferring
real/complex analysis claims to the $p$-adic world. 
See \cite[Example 26.6]{Schikhof} for a famous example of a function
$f: \mathbb{Z}_{p}\to \mathbb{Q}_{p}$
with ``ordinary'' derivative $Df\equiv 1$ which is not injective in any neighborhood of $0$. 
However, with the above notion of strong differentiability most 
common real analysis facts can be conserved. 
If we assume our function to be analytic then the inverse function theorem 
extends to the $p$-adic settings as claimed in Serre's book \cite{Serre}. 

\begin{theorem}[$p$-adic Inverse Function Theorem]  \label{ift}
	Assume the function $\phi: U\subseteq \mathbb{Q}_{p}^{n}\to \mathbb{Q}_{p}^{n}$ 
	for $\0\in U$ open is analytic. Then if $D\phi(\0)$ induces a linear isomorphism,
	then $\phi$ is a local isomorphism. 
	\end{theorem}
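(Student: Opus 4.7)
The plan is to adapt the classical contraction-mapping proof of the inverse function theorem to the ultrametric $p$-adic setting, exploiting completeness of $\Qp^n$ under $\|\cdot\|_p$. First, by post-composing with $(D\phi(\0))^{-1}$---itself a linear (hence analytic) automorphism of $\Qp^n$ with analytic inverse---I reduce to the case $D\phi(\0)=I$. Writing $\phi(\xx)=\xx+g(\xx)$, the hypotheses force $g(\0)=\0$ and $Dg(\0)=0$, so expanding the analytic $g$ around $\0$ gives a convergent power series with all terms of total degree at least two.

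Next, analyticity on some ball $B(\0,R)$ provides uniform bounds $\|a_{\mathbf{t}}\|_p R^{|\mathbf{t}|}\to 0$ on the coefficients of $g$. By choosing $r\leq R$ sufficiently small, $g$ becomes a strict ultrametric contraction on the closed ball $B(\0,r)$, i.e.\ $\|g(\xx)-g(\yy)\|_p\leq c\|\xx-\yy\|_p$ for some $c<1$ (taking $r$ smaller if needed yields $c\leq 1/p$, since each degree-$\geq 2$ term contributes at least one extra factor of $r$). For fixed $\yy\in B(\0,r)$, the map $T_{\yy}(\xx)=\yy-g(\xx)$ carries $B(\0,r)$ into itself (the ultrametric inequality is crucial here) and is a contraction. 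The Banach fixed-point theorem then produces a unique $\xx\in B(\0,r)$ with $\phi(\xx)=\yy$, defining a continuous inverse $\phi^{-1}$ on $B(\0,r)$.

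To upgrade the inverse from continuous to analytic, the plan is to invert the power series formally: writing $\phi^{-1}(\yy)=\sum_{\mathbf{s}}b_{\mathbf{s}}\yy^{\mathbf{s}}$ and matching coefficients in $\phi(\phi^{-1}(\yy))=\yy$, one solves for the $b_{\mathbf{s}}$ recursively. Inductively bounding $\|b_{\mathbf{s}}\|_p$ from the bounds on the $\|a_{\mathbf{t}}\|_p$ (a $p$-adic Cauchy majorant argument) exhibits convergence of the formal inverse on some ball around $\0$. Uniqueness from the fixed-point step identifies this analytic function with $\phi^{-1}$, completing the proof.

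The principal obstacle is the well-known gap between ordinary and strong differentiability in the $p$-adic world, flagged by the paper via Schikhof's example: the real-variable proof cannot simply be transplanted. The mean-value theorem fails (there are no line segments in $\Qp^n$), so the usual estimate $\|g(\xx)-g(\yy)\|_p\leq C\sup\|Dg\|\cdot\|\xx-\yy\|_p$ has no direct analogue. The contraction estimate must instead be derived term-by-term from the power series via the factorization $\xx^{\mathbf{t}}-\yy^{\mathbf{t}}=(\xx-\yy)\cdot Q_{\mathbf{t}}(\xx,\yy)$ for suitable polynomials $Q_{\mathbf{t}}$, combined with uniform ultrametric bounds on $\|Q_{\mathbf{t}}\|_p$ over $B(\0,r)$. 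It is precisely at this point that genuine analyticity, rather than mere $C^\infty$ smoothness in the weak $p$-adic sense, is indispensable.
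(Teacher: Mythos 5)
The paper does not prove Theorem~\ref{ift}: it is quoted from Serre \cite{Serre}, Section~9, p.~113, where the theorem is established for analytic maps over an arbitrary complete non-archimedean field via formal power-series inversion and a majorant estimate. So there is no in-paper proof for you to match or deviate from, and your argument has to be judged on its own merits.

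Your outline is correct and combines two standard mechanisms. Reducing to $D\phi(\0)=I$ is fine, but note that the hypotheses do not force $\phi(\0)=\0$; you must also translate in the target so that $g(\0)=\0$. The contraction estimate $\|g(\xx)-g(\yy)\|_p\leq c\|\xx-\yy\|_p$ with $c<1$ on a small enough ball is obtained correctly from the order-$\geq 2$ vanishing of $g$ together with the factorization $\xx^{\mathbf t}-\yy^{\mathbf t}=(\xx-\yy)\cdot Q_{\mathbf t}(\xx,\yy)$; applied on a closed $p$-adic ball (which is complete), the Banach fixed-point theorem yields a local inverse, and the strong triangle inequality even shows $\phi$ to be an isometry of $B(\0,r)$. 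The analyticity upgrade by formal inversion is also sound, but you should make the key combinatorial point explicit: each coefficient $b_{\mathbf s}$ is a $\Z$-coefficient polynomial in products $a_{\mathbf t_1}\cdots a_{\mathbf t_k}$ whose weights satisfy $\sum_i(|\mathbf t_i|-1)=|\mathbf s|-1$, and this homogeneity together with the ultrametric inequality (no denominators accumulate) is exactly what makes the radius of convergence close up; as sketched, the ``$p$-adic Cauchy majorant argument'' is a gesture rather than a proof. Incidentally, the two halves of your argument are redundant: once the formal inverse $\Phi$ is shown to converge and $\phi\circ\Phi=\Phi\circ\phi=\mathrm{id}$ are verified as convergent compositions on a small ball, the contraction step is unnecessary; conversely, keeping it obliges you to identify the two inverses, as you observe. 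Finally, your closing remark slightly overstates the role of analyticity: what the contraction estimate and local bijectivity actually require is \emph{strict} (strong) differentiability in the sense of the paper's Section~\ref{prelim} (cf.\ \cite{Schikhof}, \S 27). Analyticity is the convenient and stronger sufficient condition used here, and it is of course needed if the inverse is to be analytic, which is the content of the statement.
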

We remark that when $n=1$, in fact, the function
$\phi/\phi^{\prime}(0)$ is a local isometry,
i.e. $|\phi(x)-\phi(y)|_{p}/|x-y|_{p}= |D\phi(a)|_{p}$ is constant
for $x,y$, $x\ne y$, close enough to $a$, 
see \cite[Proposition 27.3]{Schikhof}. We further point out
that Theorem~\ref{ift} is the only reason why we require 
our parametrising function $\textbf{g}$ to be analytic, for all other arguments $C^{2}$ would suffice.

From the inverse function theorem it can be shown that some
mean value estimate similar to the real case holds.

\begin{theorem}[$p$-adic Mean Value Theorem] \label{pmvt}
	For $\phi$ as in Theorem~\ref{ift} and small enough $r$ we have
	\[
	\phi(B(\xx, r)) \subseteq B(\phi(\xx), \Vert D\phi(\xx)\Vert_{p}\cdot r).
	\]
\end{theorem}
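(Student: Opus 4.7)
The strategy is to exploit strong differentiability (a consequence of analyticity, as recorded in~\eqref{eq:new}) together with the ultrametric inequality, which in the $p$-adic setting is strictly stronger than the triangle inequality and makes a mean value estimate nearly formal. In the real case an analogous statement requires integrating the derivative along a path, but here we can simply take the maximum of the linear and remainder contributions and use that $\|L\hh\|_p \le \|L\|_p\,\|\hh\|_p$ for any $p$-adic linear map $L$.

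First I would fix $\xx$ in the neighborhood provided by Theorem~\ref{ift} (so that $L:=D\phi(\xx)$ remains a linear isomorphism, in particular $\|L\|_p>0$) and expand $\phi$ using its convergent power series around $\xx$, writing
\[
\phi(\xx+\hh) \;=\; \phi(\xx) + L\hh + R(\hh),
\]
where $R(\hh)$ collects the monomials of total degree at least two in $\hh$. Standard non-archimedean estimates applied termwise to the Taylor series give a bound $\|R(\hh)\|_p \le C(\xx)\,\|\hh\|_p^{2}$ on some ball $B(\0,\rho)$. Equivalently, one can invoke strong differentiability~\eqref{eq:new} directly: for every $\varepsilon>0$ there is $\delta>0$ with $\|R(\hh)\|_p\le \varepsilon\,\|\hh\|_p$ whenever $\|\hh\|_p\le \delta$.

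Now I would choose $r$ small enough to satisfy both $r\le \rho$ and $C(\xx)\,r \le \|L\|_p$ (or, in the alternative formulation, set $\varepsilon=\|L\|_p$ and $r=\delta$). For every $\hh$ with $\|\hh\|_p\le r$,
\[
\|L\hh\|_p \le \|L\|_p\,\|\hh\|_p, \qquad \|R(\hh)\|_p \le \|L\|_p\,\|\hh\|_p,
\]
and the ultrametric inequality yields
\[
\|\phi(\xx+\hh)-\phi(\xx)\|_p \;=\; \|L\hh+R(\hh)\|_p \;\le\; \max\bigl\{\|L\hh\|_p,\,\|R(\hh)\|_p\bigr\} \;\le\; \|L\|_p\cdot r,
\]
which is exactly the claimed inclusion $\phi(B(\xx,r))\subseteq B(\phi(\xx),\|D\phi(\xx)\|_p\cdot r)$.

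The main obstacle I anticipate is making the quadratic remainder estimate $\|R(\hh)\|_p\le C(\xx)\,\|\hh\|_p^{2}$ rigorous: one must verify that the power series for $\phi$ around $\xx$ converges on a definite ball, and that the non-archimedean ``first term dominates'' principle applies to the tail starting from the degree-two terms. This is all standard material in the $p$-adic analytic framework (compare~\cite{Serre} and~\cite{Schikhof}), but it has to be invoked cleanly. Should a reader prefer to avoid any series manipulation, the whole argument can instead be routed through the strong-differentiability condition~\eqref{eq:new}, which the paper has already built into its working definition of analyticity.
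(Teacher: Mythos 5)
Your proof is correct, and it matches the approach the paper itself hints at: the paper states the theorem without proof, remarking first that it ``can be shown'' from the inverse function theorem and then that ``in fact, the mean value estimate only requires strong differentiability.'' Your argument is precisely a fleshing-out of the latter remark — decompose $\phi(\xx+\hh)=\phi(\xx)+L\hh+R(\hh)$, bound the remainder $\|R(\hh)\|_p\le\varepsilon\|\hh\|_p$ for small $\hh$ via~\eqref{eq:new} with $\varepsilon=\|L\|_p$, note $\|L\hh\|_p\le\|L\|_p\|\hh\|_p$ for the entrywise max norm, and conclude by the ultrametric inequality. You correctly observe that $\|L\|_p>0$ is needed (otherwise $\varepsilon=0$ gives no information, and the claimed inclusion would in fact fail, e.g.\ for $\phi(x)=x^2$ at $x=0$), and this is supplied by the hypothesis of Theorem~\ref{ift}. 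Your alternative route via termwise estimation of the Taylor tail is also sound, though a bit heavier than necessary; in either case the argument avoids any genuine appeal to the inverse function theorem itself, which makes it a touch more elementary than the route the paper nominally suggests.
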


In other words a ball of radius $r$ (with respect to $\Vert .\Vert_{p}$-norm) around $\xx\in\Q_{p}^{n}$ 
is mapped into a ball of radius $\ll_{\xx} r$.  In fact, the mean
value estimate only requires
strong differentiability.

\subsection{The main lemma}

Let $\UU \subset\Qp^{d}$ be a connected bounded open set and
\begin{equation*}
\Lambda_{\MM}:= \underset{\|\zz\|_{p}=1}{\bigcup_{\zz\in \Z^{n-d}:}} S_{\MM}(\zz)=\underset{\|\zz\|_{p}=1}{\bigcup_{\zz\in \Z^{n-d}:}}\left\{ \xx \in \UU : M_{\zz}(\xx)=\left( \sum_{k=1}^{n-d}z_{k}\frac{\partial^{2} g_{k}(\xx)}{\partial x_{i} \partial x_{j}} \right)_{1\leq i, j \leq d} \, \text{ is singular} \right\} .
\end{equation*}
Observe that for each individual vector $\zz \in \Z^{n-d}\backslash\{\0\}$ 
of $p$-norm $1$, the above set $S_{\MM}(\zz)$ has $\HH^{f}$-measure zero by condition (IIp), and so by the sigma-additivity of the $\HH^{f}$ measure we have that
\begin{equation*}
\HH^{f}(\Lambda_{\MM})=0.
\end{equation*}
Thus we may restrict to fixed $\zz$ and just write $S_{\MM}=S_{\MM}(\zz)$.
Observe that $\UU \backslash S_{\MM}$ can be covered by countably many small open balls $B_{i}$ such that $\det M(\xx)$ is bounded on $B_{i}$ and $\det M(\xx) >\varepsilon_{i}$ for some $\varepsilon_{i}>0$ and all $\xx \in B_{i}$. 
 If we can show that
\begin{equation*}
\HH^{f}\left( D_{n,p}^{\theta}(\Psi) \cap \{(\xx,\textbf{g}(\xx)): \xx \in B_{i}\} \right) =0
\end{equation*}
for all $B_{i}$, then by the subadditivity of $\HH^{f}$ we would have that
\begin{equation*}
\HH^{f}\left( D_{n,p}^{\theta}(\Psi) \cap \MM \right) =0.
\end{equation*}
Hence without loss of generality let us assume that $\UU$ is an open ball with the $d\times d$ matrices $M(\xx)$  with determinant bounded away from $\0$. Furthermore suppose that $\UU \subset \Zp^{d}$ and $\0\in \UU$. \par

For any $\ba=(a_{0},a_{1}, \dots , a_{n}) \in \Z^{n+1}$ consider the set
\begin{equation*}
S(\ba):=\left\{ \xx \in \UU : |a_{1}x_{1}+\dots a_{d}x_{d} + a_{d+1}g_{1}(\xx)+ \dots + a_{n}g_{n-d}(\xx) +a_{0} + \theta |_{p}< \Psi(\ba) \right\}.
\end{equation*}
We prove the following key lemma.

\begin{lemma} \label{S(a)_haus_cont}
 For any $\ba \in \Z \times (\Z^{n}\backslash \{\0\})$, $\textbf{g}:\UU \to \Qp^{n}$ satisfying (IIp), and dimension function $f$ satisfying (Ip) for some $s<2(d-1)$, we have that
\begin{equation*}
\HH^{f}_{\infty}(S(\ba)) \ll  \|(a_{1}, \dots ,a_{n})\|_{p}^{(d-1)-s}\Psi(\ba)^{-(d-1)} f\left( \Psi(\ba)\right),
\end{equation*}
with the implied constant independent of $\ba$.
\end{lemma}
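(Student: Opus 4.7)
The plan is to cover $S(\ba)$ by $p$-adic balls whose total $f$-content is controlled, using the Taylor expansion of
\[H_\ba(\xx):=\sum_{i=1}^d a_ix_i+\sum_{k=1}^{n-d}a_{d+k}g_k(\xx)+a_0+\theta\]
around suitable base points together with the non-degeneracy of its Hessian. Throughout, write $\tilde{\zz}:=(a_{d+1},\ldots,a_n)$ and $A:=\|(a_1,\ldots,a_n)\|_p$.

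\emph{Setup and affine case.} If $\tilde\zz=\0$, then $H_\ba$ is affine with linear part of $p$-norm $A$, so $S(\ba)$ is a slab of thickness $\Psi(\ba)/A$ that can be covered by $\asymp(A/\Psi(\ba))^{d-1}$ balls of radius $\Psi(\ba)/A$. Condition (Ip) with $y=\Psi(\ba)$ and $x=1/A$ gives $f(\Psi(\ba)/A)\ll A^{-s}f(\Psi(\ba))$, so the total $f$-content is bounded by $A^{(d-1)-s}\Psi(\ba)^{-(d-1)}f(\Psi(\ba))$, as required. For $\tilde\zz\ne\0$ we factor $\tilde\zz=p^\nu\zz_0$ with $\|\zz_0\|_p=1$; then $\operatorname{Hess}H_\ba(\xx)=p^\nu M_{\zz_0}(\xx)$ has $|\det|_p\asymp\|\tilde\zz\|_p^d$ by the standing reduction on $\UU$. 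If additionally $\|\tilde\zz\|_p<A$, the ultrametric inequality forces $\|\nabla H_\ba(\xx)\|_p=A$ uniformly on $\UU$ and the same slab estimate produces the bound.

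\emph{Curved case $\|\tilde\zz\|_p=A$.} Applying the $p$-adic Inverse Function Theorem (Theorem~\ref{ift}) and the Mean Value Theorem (Theorem~\ref{pmvt}) to the gradient map $G_\ba:=\nabla H_\ba\colon\UU\to\Qp^d$, whose Jacobian $p^\nu M_{\zz_0}$ has $|\det|_p\asymp A^d$ and entries uniformly bounded in $p$-norm, shows that $G_\ba$ acts as a $p$-adic homothety of factor $\asymp A$; consequently, for each $\gamma\le A$ the sublevel set $\{\xx\in\UU:\|\nabla H_\ba(\xx)\|_p\le\gamma\}$ sits inside a single ball of radius $\asymp\gamma/A$. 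Set $\gamma_\ast:=(A\Psi(\ba))^{1/2}$. For $\gamma_\ast<\gamma\le A$, Taylor expansion shows that on an annulus of gradient $\asymp\gamma$ the linear and quadratic terms are each $\asymp\gamma^2/A$ in $p$-norm, so $S(\ba)$ meets the annulus in a slab of thickness $\Psi(\ba)/\gamma$ inside a region of scale $\gamma/A$, and this is covered by $\asymp(\gamma^2/(A\Psi(\ba)))^{d-1}$ balls of radius $\Psi(\ba)/\gamma$, each of $f$-content $\ll\gamma^{-s}f(\Psi(\ba))$ by (Ip). Summing over dyadic $\gamma$, the exponent $2(d-1)-s>0$ (from $s<2(d-1)$) makes the sum dominated by $\gamma=A$, yielding $\ll A^{(d-1)-s}\Psi(\ba)^{-(d-1)}f(\Psi(\ba))$. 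For $\gamma\le\gamma_\ast$ the entire sublevel sits in one ball of radius $\asymp\sqrt{\Psi(\ba)/A}$ on which $H_\ba$ varies by at most $\gamma_\ast^2/A=\Psi(\ba)$, so $S(\ba)$ there is contained in that single ball; its $f$-content is $f(\sqrt{\Psi(\ba)/A})\ll(A\Psi(\ba))^{-s/2}f(\Psi(\ba))$ by (Ip), and using $\Psi(\ba)/A<1$ (from $A\ge\|\ba\|^{-1}$ for non-zero integer vectors together with $\Psi(\ba)<\|\ba\|^{-1}$) and $(d-1)-s/2>0$, this is again $\ll A^{(d-1)-s}\Psi(\ba)^{-(d-1)}f(\Psi(\ba))$.

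\emph{Main obstacle.} The principal technical obstacle is to justify that the gradient map $G_\ba$ acts, in the $p$-adic setting, as a well-conditioned homothety of factor $\asymp A$ rather than as a generic analytic map; this is what describes the shape of its sublevel sets. Since the entries of $M_{\zz_0}$ are bounded in $p$-norm while $|\det M_{\zz_0}|_p$ is bounded below on $\UU$, all elementary divisors of $M_{\zz_0}$ have $p$-norm $\asymp 1$, and the conclusion follows from Theorems~\ref{ift} and~\ref{pmvt}. The remainder is bookkeeping of dyadic scales via (Ip).
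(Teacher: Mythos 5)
Your proposal is essentially the same argument as the paper's, organized a little differently: the paper first proves a gradient dichotomy (its Lemma 3.2, splitting into the four cases $\ba_2=\0$, $\|\ba_1\|_p>\sup\|\ba_2\cdot\nabla\mathbf g\|_p$, a critical point $\vv$ exists, and neither), then invokes a separate slab-covering lemma (its Lemma 3.3), and in the critical-point case sums over geometric annuli $A_k$ around $\vv$; you instead foliate directly by the size of $\|\nabla H_\ba\|_p$ and sum dyadically in $\gamma$, which is the same decomposition up to bookkeeping because the gradient map is a $p$-adic homothety. The slab count $(\gamma^2/(A\Psi(\ba)))^{d-1}$ you assert on each gradient annulus is exactly what the paper's Lemma 3.3 delivers, and your homothety claim is precisely the paper's combination of Theorems 2.2–2.3 applied to $\nabla h_\ba$, so nothing conceptually new or missing. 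Two small points where the paper is more careful: first, your case ``$\|\tilde{\zz}\|_p<A$ forces $\|\nabla H_\ba\|_p=A$'' implicitly assumes $\sup_{K}\|\nabla\mathbf g\|_p\le 1$; the paper avoids this by splitting on the sign of $\|\ba_1\|_p-\sup_{K}\|\ba_2\cdot\nabla\mathbf g\|_p$ rather than on $\|\ba_1\|_p$ vs.\ $\|\ba_2\|_p$, and you should either impose the normalization on a compact $K$ or enlarge your curved case to $\|\tilde\zz\|_p\gg A$. Second, the paper normalizes by $\widetilde{a_2}=\|\ba_2\|_p$ so that the Hessian has $p$-norm $\asymp 1$ (making the hypothesis of its covering lemma transparent), whereas you keep the Hessian of size $\asymp A$ and compensate by shrinking the base scale $\alpha\asymp\gamma/A$; both are fine, but the normalized version is cleaner when verifying the hypothesis of the covering lemma. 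Your cut-off at $\gamma_\ast=(A\Psi(\ba))^{1/2}$, replacing the infinite annular sum by one ball, is a neat simplification over the paper's treatment of the small annuli, and the final use of (Ip) together with $s<2(d-1)$ to make the dyadic sum geometric is identical.
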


\section{Proof of Lemma~\ref{S(a)_haus_cont}}

We may restrict $K$ to a compact subset of $\UU$. 
For $\ba \in \Z^{n+1}$ write 
\begin{equation*}
\ba=(a_{0}, \ba_1,\ba_2)=(a_{0},a_{1},\dots,a_{d}, a_{d+1}\dots,a_{n}) \in \Z \times \Z^{d}\times \Z^{n-d},
\end{equation*}
 and let 
\begin{equation*}
\widetilde{a_{2}}=\begin{cases}
\,\,\, \, 1 \quad \quad  \,\, \text{ if } \ba_{2}=\0, \\
\|\ba_{2}\|_{p}\quad \text{ otherwise.}
\end{cases}
\end{equation*}
 Define $h_{\ba}:\Qp^{d} \to \Qp$ by
\begin{equation*}
h_{\ba}(\xx)= \widetilde{a_{2}}\ba_{1}\cdot \xx + \widetilde{a_{2}}\ba_{2}\cdot \textbf{g}(\xx) + \widetilde{a_{2}}(a_{0}+\theta) \, ,
\end{equation*}
where
\begin{equation*}
\widetilde{a_{2}}\ba_{1}\cdot\xx=\widetilde{a_{2}}a_{1}x_{1}+\dots + \widetilde{a_{2}}a_{d}x_{d},
\end{equation*}
and similarly for $\widetilde{a_{2}}\ba_{2}\cdot \textbf{g}(\xx)$. We may write
\begin{equation*}
S(\ba)=\left\{ \xx \in K : |h_{\ba}(\xx)|_{p}< \widetilde{a_{2}}^{-1}\Psi(\ba) \right\} \, .
\end{equation*}
Note that
\begin{align*}
\nabla h_{\ba}(\xx)&=\left( \frac{\partial h_{\ba}(\xx)}{\partial x_{1}}, \dots , \frac{\partial h_{\ba}(\xx)}{\partial x_{d}} \right), \\
&=\left(\widetilde{a_{2}}a_{1}+\sum_{k=1}^{n-d}\widetilde{a_{2}}a_{d+k}\frac{\partial g_{k}(\xx)}{\partial x_{1}} \, , \, \dots \, , \, \widetilde{a_{2}}a_{d}+\sum_{k=1}^{n-d}\widetilde{a_{2}}a_{d+k}\frac{\partial g_{k}(\xx)}{\partial x_{d}} \right) , \\
&=\widetilde{a_{2}}\ba_{1} +\widetilde{a_{2}}\ba_{2} \cdot \left(\nabla \textbf{g}(\xx)\right) ,
\end{align*}
for 
\begin{equation*}
\nabla \textbf{g}(\xx)=\left(\frac{\partial \textbf{g}(\xx)}{\partial x_{1}}, \dots , \frac{\partial \textbf{g}(\xx)}{\partial x_{d}}\right) \quad \text{ with } \quad \frac{\partial \textbf{g}(\xx)}{\partial x_{i}}=\left( \frac{\partial g_{1}(\xx)}{\partial x_{i}}, \dots , \frac{\partial g_{n-d}(\xx)}{\partial x_{i}} \right),
\end{equation*}
a matrix with $n-d$ rows and $d$ columns and
\begin{equation*}
\nabla^{2} h_{\ba}(\xx)=\left( \sum_{k=1}^{n-d}\widetilde{a_{2}}a_{d+k}\frac{\partial^{2} g_{k}(\xx)}{\partial x_{i} \partial x_{j}} \right)_{1\leq i,j \leq d}.
\end{equation*}
Note that if $\ba_{2}\neq \0$ the vector $\left\|\widetilde{a_{2}}\ba_{2}\right\|_{p}=1$ by our choice of $\widetilde{a_{2}}$, and so 
identifying $z_k=\widetilde{a_{2}} a_{d+k}$ for 
$k=1,2,\ldots,n-d$, by our assumption (IIp)
$\nabla^{2}h_{\ba}(\xx)$ has a non-zero determinant outside a set of $\HH^f$-measure $0$. We may, for simplicity, assume that the 
exceptional set is empty, $K\cap \Lambda_{\MM}=\emptyset$.
 We next bound $\nabla^2 h_{\ba}$ uniformly from above. \\
 
\begin{lemma} \label{h_lem} For $\ba \in \Z^{n+1}$ and $h_{\ba}$ defined as above
\begin{enumerate}
\item[\rm (a)] If $\ba_{2}=\0$ then 
$$
\|\nabla h_{\ba}(\xx)\|_{p} \asymp \|\ba_{1}\|_{p}  \quad \forall \, \xx \in K.
$$
\item[\rm (b)] If $\ba_{2}\ne \0$ and $\|\ba_{1}\|_{p}>\sup_{\ww \in K}\left\|\ba_{2}\cdot \nabla \textbf{g}(\ww)\right\|_{p}$ then 
$$
\|\nabla h_{\ba}(\xx)\|_{p} \asymp \|\ba_{2}\|_{p}^{-1}\|\ba_{1}\|_{p}  \quad \forall \, \xx \in K.
$$
\item[\rm (c)] If there exists $\vv \in K$ with $\nabla h_{\ba}(\vv)=\0$ then 
$$
\|\nabla h_{\ba}(\xx)\|_{p} \asymp \|\xx-\vv\|_{p}  \quad \forall \, \xx \in K.
$$
\item[\rm (d)] If no such $\vv \in K$ exists, and  $\|\ba_{1}\|_{p}\leq \sup_{\ww \in K}\|\ba_{2}\cdot \nabla \textbf{g}(\ww)\|_{p}$ then 
$$
0<\|\nabla h_{\ba}(\xx)\|_{p} \leq \sup_{\ww \in  K}\|\nabla \textbf{g}(\ww)\|_{p} \quad \implies \quad \|\nabla h_{\ba}(\xx)\|\asymp 1 \quad \forall \, \xx \in K.
$$
\end{enumerate}
In all of the above cases $\|\nabla^{2}h_{\ba}(\xx)\|_{p} \ll 1$ for all $\xx\in K$.
\end{lemma}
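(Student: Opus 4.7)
The strategy is to handle the four cases individually, each time exploiting the ultrametric triangle equality to compare the sizes of the two competing terms in
\[
\nabla h_{\ba}(\xx)=\widetilde{a_{2}}\ba_{1}+\widetilde{a_{2}}\ba_{2}\cdot\nabla\textbf{g}(\xx).
\]
Three standing observations will be used throughout. When $\ba_{2}\ne\0$, one has $|\widetilde{a_{2}}|_{p}=1/\|\ba_{2}\|_{p}$, so $\widetilde{a_{2}}\ba_{2}$ is an integer vector of $p$-norm $1$ lying on the compact sphere $\{\zz\in\Z_{p}^{n-d}:\|\zz\|_{p}=1\}$. Each coefficient in $\nabla^{2}h_{\ba}(\xx)=\sum_{k=1}^{n-d}\widetilde{a_{2}}a_{d+k}\nabla^{2}g_{k}(\xx)$ satisfies $|\widetilde{a_{2}}a_{d+k}|_{p}\le 1$, and the analytic Hessians $\nabla^{2}g_{k}$ are uniformly bounded on the compact $K$; this immediately delivers the blanket bound $\|\nabla^{2}h_{\ba}(\xx)\|_{p}\ll 1$ appearing as the final assertion of the lemma. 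Finally, the preliminary reduction of $\UU$ to a small ball avoiding $\Lambda_{\MM}$, combined with (IIp) and the compactness of the $p$-adic unit sphere, ensures that $|\det M_{\zz}(\xx)|_{p}$ is bounded away from zero uniformly in $\xx\in K$ and in $\zz$ of $p$-norm one.

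Cases (a) and (b) will be quick. In (a), $\widetilde{a_{2}}=1$ and the $\ba_{2}$ summand vanishes, so $\nabla h_{\ba}(\xx)\equiv\ba_{1}$. In (b), the strict inequality $\|\ba_{1}\|_{p}>\sup_{\ww\in K}\|\ba_{2}\cdot\nabla\textbf{g}(\ww)\|_{p}$ turns the ultrametric triangle inequality into an equality, $\|\ba_{1}+\ba_{2}\cdot\nabla\textbf{g}(\xx)\|_{p}=\|\ba_{1}\|_{p}$; multiplying by $|\widetilde{a_{2}}|_{p}=\|\ba_{2}\|_{p}^{-1}$ yields the stated comparison.

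Case (c) is the main content. I will Taylor-expand $\nabla h_{\ba}$ about the critical point $\vv$: combining the $p$-adic mean value theorem (Theorem~\ref{pmvt}) with the uniform upper bound on $\nabla^{2}h_{\ba}$ and on its Lipschitz constants over $K$, one obtains a quadratic remainder estimate
\[
\|\nabla h_{\ba}(\xx)-\nabla^{2}h_{\ba}(\vv)(\xx-\vv)\|_{p}\ll\|\xx-\vv\|_{p}^{2}
\]
with implied constant uniform in $\ba$. Since $\det\nabla^{2}h_{\ba}(\vv)=\det M_{\widetilde{a_{2}}\ba_{2}}(\vv)$ is uniformly bounded away from zero, the linear operator $\nabla^{2}h_{\ba}(\vv)$ is a uniform $p$-adic bi-Lipschitz map on $\Q_{p}^{d}$, so $\|\nabla^{2}h_{\ba}(\vv)(\xx-\vv)\|_{p}\asymp\|\xx-\vv\|_{p}$. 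For $\xx$ in a sufficiently small fixed $p$-adic ball about $\vv$, the quadratic remainder is strictly dominated by the linear term, and the ultrametric equality then yields $\|\nabla h_{\ba}(\xx)\|_{p}\asymp\|\xx-\vv\|_{p}$; shrinking $K$ to this ball (permissible as a refinement of the cover from the preliminary reduction) closes this case.

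Case (d) will combine the upper bound logic of (b) with a compactness argument. The hypothesis $\|\ba_{1}\|_{p}\le\sup_{\ww\in K}\|\ba_{2}\cdot\nabla\textbf{g}(\ww)\|_{p}\le\|\ba_{2}\|_{p}\sup_{K}\|\nabla\textbf{g}\|_{p}$, together with $|\widetilde{a_{2}}|_{p}=\|\ba_{2}\|_{p}^{-1}$, shows that each term of $\nabla h_{\ba}(\xx)$ has $p$-norm $\le\sup_{K}\|\nabla\textbf{g}\|_{p}=O(1)$, yielding the upper bound. For the lower bound I will view $(\zz,\ww,\xx)\mapsto\ww+\zz\cdot\nabla\textbf{g}(\xx)$ as a continuous function on the compact set where $\|\zz\|_{p}=1$, $\|\ww\|_{p}\le\sup_{K}\|\nabla\textbf{g}\|_{p}$, and $\xx\in K$, restricted to the closed subset of triples on which it vanishes nowhere in $K$; non-vanishing together with the discreteness of the $p$-adic absolute value then produces a uniform positive infimum. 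The principal obstacle throughout is securing the uniformity of all $\asymp$ constants across the infinite family of integer vectors $\ba$, and compactness of the normalised sphere in $\Z_{p}^{n-d}$ is the crucial tool that makes this feasible.
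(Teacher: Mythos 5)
Your four-case decomposition is exactly the paper's, and your treatments of (a), (b) and (d) follow the same lines (ultrametric triangle inequality for (a) and (b), compactness plus discreteness of $|\cdot|_p$ for the lower bound in (d)). The one substantive variation is in case (c). The paper establishes $\|\nabla h_\ba(\xx)-\nabla h_\ba(\vv)\|_p \asymp \|\xx-\vv\|_p$ near $\vv$ by applying the $p$-adic inverse function theorem to $\widetilde{a_{2}}\ba_{2}\cdot\nabla\textbf{g}$, whose derivative is $\nabla^2 h_\ba$ and is invertible by (IIp), and then using the $p$-adic mean value theorem for this map and its local inverse. You instead Taylor-expand $\nabla h_\ba$ about $\vv$ and absorb the quadratic remainder into the uniformly bi-Lipschitz linear term $\nabla^2 h_\ba(\vv)(\xx-\vv)$. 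The two routes are close in spirit and both lean on the uniform nonvanishing of $\det M_\zz$ over the compact sphere $\{\zz:\|\zz\|_p=1\}$; your version implicitly requires a uniform Lipschitz bound on $\nabla^2 h_\ba$, i.e.\ control of third derivatives of $\textbf{g}$, which analyticity supplies, whereas the paper packages this regularity inside the inverse function theorem.

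One wrinkle in your case (d) deserves attention. The set of parameter pairs $(\zz,\ww)$ with $\|\zz\|_p=1$, $\|\ww\|_p\le\sup_K\|\nabla\textbf{g}\|_p$ for which $\ww+\zz\cdot\nabla\textbf{g}(\cdot)$ has a zero somewhere in $K$ is \emph{closed} (projection of a closed set with compact $\xx$-fibre), so the complementary set you propose to restrict to is \emph{open}, not closed, and restricting a continuous function to an open parameter set does not by itself produce a uniform positive infimum. Indeed, the asserted lower bound $\|\nabla h_\ba(\xx)\|_p\gg 1$ cannot be uniform in $\ba$ as a critical point of $h_\ba$ approaches $\partial K$ from outside. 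The paper's wording at this point is equally terse; the intended repair is to run cases (c) and (d) jointly on a slightly enlarged compact set $K'\supset K$: a critical point in $K'$ is covered by case (c) on a ball of uniformly bounded radius around it, and case (d) then only needs to deal with parameters staying a definite distance from the critical locus of $K'$, where your compactness argument does close.
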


\textit{Proof of Lemma~\ref{h_lem}:} Note that if $\ba_{2}=\0$ immediately we have that
\begin{equation*}
\| \nabla h_{\ba}(\xx)\|_{p}=\|\ba_{1}\|_{p}
\end{equation*}
for all $\xx \in K$, giving (a). \par 
 If  $\|\widetilde{a_{2}}\ba_{1}\|_{p}>\sup_{\ww \in K}\left\| \widetilde{a_{2}}\ba_{2}\cdot \nabla \textbf{g}(\ww)\right\|_{p}$, which follows by the assumption
 \[
 \|\ba_{1}\|_{p}>\sup_{\ww \in K}\left\|\ba_{2}\cdot \nabla \textbf{g}(\ww)\right\|_{p},
 \]
 then by the strong triangle inequality
\begin{equation*}
\|\nabla h_{\ba}(\xx)\|_{p}= \max\left\{\|\widetilde{a_{2}}\ba_{1}\|_{p}, \left\|\widetilde{a_{2}}\ba_{2}\cdot\nabla \textbf{g}(\xx)\right\|_{p} \right\}=\|\widetilde{a_{2}}\ba_{1}\|_{p}.
\end{equation*}
Henceforth assume $\|\widetilde{a_{2}}\ba_{1}\|_{p}\leq \sup_{\ww \in K}\left\| \widetilde{a_{2}}\ba_{2}\cdot \nabla \textbf{g}(\ww)\right\|_{p}$. \\

Since $\textbf{g}$ is analytic, and thus so is each $g_{i}$ for $1\leq i \leq n-d$, each partial derivative $\frac{\partial g_{i}}{\partial x_{j}}$ is analytic and so the function $\widetilde{a_{2}}\ba_{2}\cdot \nabla \textbf{g}: K\subset\Qp^{d} \to \Qp^{d}$ is analytic. 
Furthermore $\widetilde{a_{2}}\ba_{2}\cdot\nabla \textbf{g}$ is strongly differentiable with the linear function $L\xx =\nabla^{2}h_{\ba}(\xx)$ and non-zero determinant (thus invertible) on some small ball $B_{\xx}\subset K$. Hence $\nabla^{2}h_{\ba}(\xx)$ is a linear isomorphism and so $\widetilde{a_{2}}\ba_{2}\cdot\nabla \textbf{g}$ is a local isomorphism by Theorem~\ref{ift}. Hence by Theorem~\ref{pmvt} for any $\yy \in B_{\xx}$ 
\begin{equation} \label{needit}
\|\nabla h_{\ba}(\xx)-\nabla h_{\ba}(\yy)\|_{p}= \| \widetilde{a_{2}}\ba_{2}\cdot \nabla g(\xx)-\widetilde{a_{2}}\ba_{2}\cdot\nabla g(\yy)  \|_{p} \asymp \| \xx-\yy\|_{p}.
\end{equation}
The above implied constant depends on $\widetilde{a_{2}}\ba_{2}$. However, $\widetilde{a_{2}}\ba_{2} \in \{\zz \in \Z^{n-d} : \|\zz\|_{p}=1\}$ is a compact set.
Indeed, we have
\begin{equation}  \label{eq:Wnew}
\Vert \widetilde{a_{2}}\ba_{2}\Vert_p = \vert \widetilde{a_{2}}\vert_p \cdot \Vert\ba_{2}\Vert_p= \vert \;\Vert \ba_{2}\Vert_p\; \vert_p\cdot \Vert\ba_{2}\Vert_p= 
\Vert \ba_{2}\Vert_p^{-1}\cdot \Vert\ba_{2}\Vert_p = 1,
\end{equation}
where we employed the identity $\vert\; \vert y\vert_p\; \vert_p= \vert y\vert_p^{-1}$ for any $y\in \Q_p$ that is readily checked.
So uniform bounds exist. This settles (b).
 \par
Assume as in (c) there exists some $\vv \in K$ such that $\nabla h_{\ba}(\vv)=0$ (or $\|\nabla h_{\ba}(\vv)\|_{p}< \| \nabla h_{\ba}(\xx)\|_{p}$ for all $\xx \in K\backslash \{\vv\}$)\footnote{Observe that this case is not possible. To see this note that $\nabla h_{\ba}$ is continuous, and so if $\nabla h_{\ba}(\vv) \neq 0$ there exists a neighbourhood $V$ of $\vv$ such that $\|\nabla h_{\ba}(\vv)\|_{p}=\|\nabla h_{\ba}(\xx)\|_{p}$ for all $\xx \in V$. See \cite[Section 3.2]{KT07}}. Then we can apply \eqref{needit} identifying $\vv=\yy$ to get in a neighborhood
$B_{\vv}$ of $\vv$ that
\begin{equation*}
\| \nabla h_{\ba}(\xx) \|_{p}=\| \nabla h_{\ba}(\xx) - \nabla h_{\ba}(\vv) \|_{p}  \asymp \|\xx - \vv \|_{p}, \qquad \xx\in B_{\vv}.
\end{equation*}
So the claim holds within $B_{\vv}$. In the complement of any such $B_{\vv}$ within $K$ (in fact we may assume $\vv$ is unique in $K$), we can  
consider ourselves to be in case (d).
 So suppose as in (d) we have $\nabla h_{\ba}(\xx) \neq \0$ for all $\xx \in K$, and so $\| \nabla h_{\ba}(\xx)\|_{p}\gg 1$ uniformly by compactness of $K$ and the continuity of $\nabla h_{\ba}$. By the strong triangle inequality, we have that
\begin{equation} \label{h_bounds}
\| \nabla h_{\ba}(\xx)\|_{p} \leq \max \left\{ \|\widetilde{a_{2}}\ba_{1}\|_{p} , \|\widetilde{a_{2}}\ba_{2} \cdot \nabla \textbf{g}(\xx) \|_{p} \right\}.
\end{equation}
By our assumption
\begin{equation*}
 0< \|\widetilde{a_{2}}\ba_{1}\|_{p}  \leq \sup_{\xx \in K} \|\widetilde{a_{2}}\ba_{2} \cdot \nabla \textbf{g}(\xx) \|_{p} \leq \sup_{\xx \in K} \max_{1\leq i\leq d}\left\|\frac{\partial \textbf{g}(\xx)}{\partial x_{i}} \right\|_{p} \ll 1
\end{equation*}
and so by \eqref{h_bounds} we have that
\begin{equation*}
\|\nabla h_{\ba}(\xx) \|_{p} \ll1.
\end{equation*}
Combined with $\| \nabla h_{\ba}(\xx)\|_{p}\gg 1$ we have 
\begin{equation*}
  \| \nabla h_{\ba}(\xx)\|_{p} \asymp 1 .
\end{equation*}
The final claim follows from the compactness of $K$
and the continuity of $\nabla^2 h_{\ba}=\nabla^2 \textbf{g}$.
 \qed

We prove the following key lemma which is a $p$-adic analogue of \cite[Lemma 2.4]{HSS1}.

\begin{lemma} \label{ll}
	Let $\phi: U \subset \Q_{p}^{d}\to \Q_{p}$ be an analytic
	function, and fix $\alpha>0$, $\delta>0$, and $\xx\in U$ such that $B_{d}(\xx,\alpha) \subset U$.
	There exists a constant $C > 0$ depending only on $d$ such that if
	\[
	\|\nabla \phi(\xx)\|_{p} \geq C \alpha \sup_{\ww\in U} 
	\|\nabla^2 \phi(\ww)\|_{p},
	\]
	then $$S(\phi,\delta) = \{\yy\in B_{d}(\xx,\alpha) : |\phi(\yy)|_{p} < \|\nabla \phi(\xx)\|_{p} \delta\}$$ can be covered by $\asymp (\alpha/\delta)^{d-1}$ balls in $\mathbb{Q}_{p}^{d}$ of radius $\delta$.
\end{lemma}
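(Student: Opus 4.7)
The plan is to identify a coordinate direction $i^{*}\in\{1,\dots,d\}$ with $|\partial_{i^{*}}\phi(\xx)|_{p}=\|\nabla\phi(\xx)\|_{p}$, show that $\phi$ behaves along this direction like a nondegenerate linear form of slope $\|\nabla\phi(\xx)\|_{p}$ throughout $B_{d}(\xx,\alpha)$, and deduce that $S(\phi,\delta)$ is a ``tube'' of width $\delta$ transverse to the $i^{*}$-axis, which admits a product-ball cover of the claimed size.

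First, I would show that the gradient keeps its norm on $B_{d}(\xx,\alpha)$. Applying Theorem~\ref{pmvt} to the analytic map $\nabla\phi$ gives
$$\|\nabla\phi(\yy)-\nabla\phi(\xx)\|_{p}\le \alpha\sup_{\ww\in U}\|\nabla^{2}\phi(\ww)\|_{p}\le C^{-1}\|\nabla\phi(\xx)\|_{p},\qquad \yy\in B_{d}(\xx,\alpha),$$
so for any $C>1$ the ultrametric triangle inequality forces $\|\nabla\phi(\yy)\|_{p}=\|\nabla\phi(\xx)\|_{p}$ and coordinate-wise $|\partial_{i^{*}}\phi(\yy)|_{p}=\|\nabla\phi(\xx)\|_{p}$ on that ball. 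Next, analyticity furnishes a $p$-adic Taylor expansion with quadratic remainder
$$\phi(\yy)-\phi(\yy')=\nabla\phi(\yy')\cdot(\yy-\yy')+R(\yy,\yy'),\qquad |R(\yy,\yy')|_{p}\ll \sup_{U}\|\nabla^{2}\phi\|_{p}\cdot\|\yy-\yy'\|_{p}^{2}$$
for $\yy,\yy'\in B_{d}(\xx,\alpha)$. Specialising to $\yy,\yy'\in S(\phi,\delta)$ that agree in every coordinate except $i^{*}$, the linear piece has $p$-norm $\|\nabla\phi(\xx)\|_{p}\cdot|y_{i^{*}}-y'_{i^{*}}|_{p}$, while $|R|_{p}\le C^{-1}\|\nabla\phi(\xx)\|_{p}|y_{i^{*}}-y'_{i^{*}}|_{p}$ (using $|y_{i^{*}}-y'_{i^{*}}|_{p}\le\alpha$). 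By the ultrametric inequality, $|\phi(\yy)-\phi(\yy')|_{p}=\|\nabla\phi(\xx)\|_{p}|y_{i^{*}}-y'_{i^{*}}|_{p}$, and since $|\phi(\yy)|_{p},|\phi(\yy')|_{p}<\|\nabla\phi(\xx)\|_{p}\delta$, this forces $|y_{i^{*}}-y'_{i^{*}}|_{p}<\delta$. Hence over any fixed value of the other $d-1$ coordinates, the $i^{*}$-slice of $S(\phi,\delta)$ fits into a single $1$-dimensional $\delta$-ball.

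The covering count is then immediate: cover $B_{d}(\xx,\alpha)$ in the directions $j\ne i^{*}$ by $\asymp(\alpha/\delta)^{d-1}$ balls of radius $\delta$, and by the slicing above each such projection fibre extends to at most one $d$-dimensional $\delta$-ball that meets $S(\phi,\delta)$, giving the required cover. The main obstacle I anticipate is the quadratic-remainder estimate, since Theorem~\ref{pmvt} applied directly to $\phi$ yields only a linear bound. My intended route is to use analyticity (Definition~\ref{Defana}) to expand $\phi$ in a convergent power series around $\yy'$; terms of total degree at least two contribute a quantity of ultrametric size at most $\|\yy-\yy'\|_{p}^{2}$ times a uniform bound on their coefficients, which in turn relates to $\sup_{U}\|\nabla^{2}\phi\|_{p}$ up to $p$-factorial factors that can be absorbed by enlarging $C$ depending on $d$.
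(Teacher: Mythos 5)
Your approach is genuinely different from the paper's. The paper straightens $\phi$ by forming the auxiliary map $\Phi(\yy)=(\kappa y_1,\dots,\kappa y_{d-1},\phi(\yy))$ (after translating and rotating so that $\nabla\phi(\0)=\kappa\ee_d$), uses the Mean Value Inequality on $\nabla\Phi$ to show $\nabla\Phi$ stays in a small ball around $\kappa I_d$, deduces via Proposition~\ref{ppp1} and the $p$-adic Inverse Function Theorem that $\Phi$ is uniformly bi-Lipschitz on $B_d(\0,\alpha)$, and then pulls back a product cover of $B_{d-1}(\0,\kappa\alpha)\times B_1(\0,\kappa\delta)$ via Proposition~\ref{ppp2}. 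Your route is a direct slicing/tube argument: you select the coordinate $i^{*}$ where the gradient is largest, show by the ultrametric inequality that this coordinate dominates throughout the ball, and conclude directly that $S(\phi,\delta)$ is thin in the $i^{*}$-direction. The bi-Lipschitz approach conveniently avoids any second-order Taylor estimate (only the first-order mean value applied to $\nabla\Phi$ and $\Phi$ is needed), whereas your approach needs a genuine quadratic-remainder bound; in exchange it is more hands-on and does not require the detour through matrix invertibility. Both are reasonable.

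There is a gap in the covering step, though it is fixable. You establish the key estimate $|y_{i^*}-y'_{i^*}|_p<\delta$ only for $\yy,\yy'\in S(\phi,\delta)$ \emph{agreeing exactly} in every coordinate other than $i^*$, and then assert that covering the projection to the non-$i^*$ coordinates by $\asymp(\alpha/\delta)^{d-1}$ balls of radius $\delta$ and taking one $d$-dimensional $\delta$-ball over each suffices. But for that conclusion you need the estimate when $\yy,\yy'$ merely lie in the \emph{same} $(d-1)$-dimensional $\delta$-ball of the projection, not on the same exact fibre. The argument does extend: write out $\nabla\phi(\yy')\cdot(\yy-\yy')=\partial_{i^*}\phi(\yy')(y_{i^*}-y'_{i^*})+\sum_{j\ne i^*}\partial_j\phi(\yy')(y_j-y'_j)$; the cross terms have $p$-norm at most $\|\nabla\phi(\xx)\|_p\delta$, so if $|y_{i^*}-y'_{i^*}|_p>\delta$ then $\|\yy-\yy'\|_p=|y_{i^*}-y'_{i^*}|_p$, the $i^*$-term dominates both the cross terms and the remainder, and the ultrametric equality forces $|\phi(\yy)-\phi(\yy')|_p=\|\nabla\phi(\xx)\|_p|y_{i^*}-y'_{i^*}|_p>\|\nabla\phi(\xx)\|_p\delta$, a contradiction. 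You should state and use this strengthened version. As for the quadratic remainder that you flag: rather than trying to bound all power-series coefficients of degree $\ge 2$ by $\sup\|\nabla^2\phi\|_p$ (which involves controlling radius-of-convergence data), the cleaner route is the same two-stage mean value trick the paper uses for $\Phi$: apply the $p$-adic mean value inequality once to $\nabla\phi$ to get $\|\nabla\phi(\zz)-\nabla\phi(\yy')\|_p\le\sup\|\nabla^2\phi\|_p\,\|\zz-\yy'\|_p$, and once more to the linearization error $\phi(\cdot)-\phi(\yy')-\nabla\phi(\yy')\cdot(\cdot-\yy')$ (using the ``sup over the ball'' form of the inequality, as in Schikhof, rather than Theorem~\ref{pmvt} literally, since the derivative at $\yy'$ vanishes) to obtain $|R(\yy,\yy')|_p\le\sup\|\nabla^2\phi\|_p\,\|\yy-\yy'\|_p^2$ directly.
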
 

Our proof will slightly deviate from the real case in \cite{HSS1}, but 
we employ the same idea.

\begin{proposition} \label{ppp1}
	Let $d\in\N$. The invertible matrices form an open subset of $\Q_{p}^{d\times d}$.
\end{proposition}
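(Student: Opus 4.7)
The plan is to exhibit the set of invertible matrices as the preimage of an open set under a continuous map. Equip $\Q_p^{d\times d}\cong \Q_p^{d^2}$ with the usual product topology induced by $\Vert\cdot\Vert_p$. A matrix $A\in \Q_p^{d\times d}$ is invertible if and only if $\det A\neq 0$, so the set of invertible matrices is exactly $\det^{-1}(\Q_p\setminus\{0\})$.

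Next I would observe that $\det:\Q_p^{d\times d}\to \Q_p$ is given by the Leibniz formula, which is a polynomial (with integer coefficients) in the $d^2$ entries. Since addition and multiplication in $\Q_p$ are continuous with respect to $|\cdot|_p$, any polynomial map between finite-dimensional $\Q_p$-vector spaces is continuous. Thus $\det$ is continuous. As $\Q_p$ is Hausdorff (being an ultrametric space), $\{0\}$ is closed and hence $\Q_p\setminus\{0\}$ is open in $\Q_p$. Continuity then gives openness of $\det^{-1}(\Q_p\setminus\{0\})$, which is precisely the desired conclusion.

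There is really no obstacle here: the argument is a one-line topological fact, and the only subtlety worth spelling out is that the $p$-adic setting behaves exactly as the real or complex case in this respect, since the proof only uses continuity of polynomial maps and Hausdorffness of the target. If a quantitative version were needed later, one could alternatively note that for any invertible $A$ with $|\det A|_p=c>0$, every $B$ with $\Vert B-A\Vert_p$ sufficiently small satisfies $|\det B|_p=c$ by the strong triangle inequality applied to the Leibniz expansion, giving an explicit open neighbourhood of $A$ consisting entirely of invertible matrices.
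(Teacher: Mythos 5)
Your proof is correct. The main argument you give is the clean topological one: the invertible matrices are $\det^{-1}(\Q_p\setminus\{0\})$, the determinant is a polynomial and hence continuous, and $\Q_p\setminus\{0\}$ is open, so the preimage is open. The paper instead gives an explicit perturbation argument: it fixes an invertible $A$ and a matrix $Y$ with $\det Y=1$, expands $\det(A+\epsilon Y)$ as a polynomial in $\epsilon$, and observes that for $|\epsilon|_p$ small enough the perturbed determinant stays nonzero. Your quantitative remark at the end is essentially the paper's idea carried out more carefully (and more correctly, since one should allow \emph{arbitrary} small perturbations $B-A$ rather than multiples of a single fixed $Y$, which only covers a one-dimensional direction in $\Q_p^{d\times d}$; your use of the strong triangle inequality on the Leibniz expansion of $\det B$ handles the full matrix perturbation). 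What your topological route buys is brevity and immunity from such bookkeeping; what the explicit estimate buys, if one needed it downstream, is a concrete radius for the neighbourhood, namely any $\delta$ with $\delta\cdot(\text{max }p\text{-adic size of the relevant minors})<|\det A|_p$, so that $|\det B|_p=|\det A|_p$ holds exactly on that ball by ultrametricity.
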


\begin{proof}
	Let $A$ be an invertible matrix. Expand the determinant $A_{\epsilon}=A+\epsilon Y$ with
	any fixed matrix $Y \in \Q_{p}^{d\times d}$ with $\det Y=1$, and $\epsilon\in \Zp$. This gives $$\det A_{\epsilon}=\det A+\epsilon z_{1}+\epsilon^{2}z_{2}+\cdots+\epsilon^{d}z_{d}$$ for some fixed $z_{i}\in\Q_{p}$ built from entries of $A$ and $Y$. Since $\det A\neq 0$ if we let
	$|\epsilon|_{p}<1$ small enough then $$\Vert \det A_{\epsilon}\Vert_{p}\geq \Vert \det A\Vert_{p}- |\epsilon|_{p} \max |z_{i}|_{p}$$ will be non-zero. \\
\end{proof}

\begin{proposition} \label{ppp2}
	Let $d \in \N$. If $\Phi: \Q_{p}^{d}\to \Q_{p}^{d}$ is a Lipschitz 
	map with Lipschitz constant $L$, 
	and $U\subseteq \Q_{p}^{d}$ can be covered
	by $k$ balls of radius $r>0$, then $\Phi(U)$ can be covered by $\ll_{d,L} k$ balls of the same radius $r$.
\end{proposition}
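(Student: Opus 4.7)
The plan is to push the given covering of $U$ forward through $\Phi$ using the Lipschitz bound, and then refine the resulting cover by larger balls into a cover by balls of the original radius $r$, exploiting the ultrametric (nested) structure of $p$-adic balls.

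First, suppose $U\subseteq \bigcup_{i=1}^{k} B(x_i, r)$ with $x_i\in \Q_p^d$. Since $\Phi$ is $L$-Lipschitz with respect to the sup-norm $\Vert\cdot\Vert_p$, for any $\yy\in B(x_i, r)$ we have $\Vert \Phi(\yy)-\Phi(x_i)\Vert_p \le L\Vert \yy-x_i\Vert_p < Lr$. Hence $\Phi(B(x_i, r))\subseteq B(\Phi(x_i), Lr)$, and therefore
\[
\Phi(U)\subseteq \bigcup_{i=1}^{k} B(\Phi(x_i), Lr).
\]

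Next, I would show that a single $p$-adic ball of radius $Lr$ in $\Q_p^d$ is covered by a bounded number of balls of radius $r$, with the bound depending only on $d$ and $L$. Choose $m\in\N$ with $p^m\ge L$ (e.g.\ $m=\lceil \log_p L\rceil$). By the ultrametric property, $B(\Phi(x_i), Lr)\subseteq B(\Phi(x_i), p^m r)$, and the latter decomposes as a disjoint union of at most $p^{md}$ balls of radius $r$ (these being cosets of the additive subgroup of vectors with each coordinate of $p$-norm at most $r$ inside the additive group of vectors of $p$-norm at most $p^m r$). Denote this constant by $C_{d,L}:=p^{md}$.

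Combining the two steps, $\Phi(U)$ is covered by at most $k\cdot C_{d,L}$ balls of radius $r$, which gives the desired bound $\ll_{d,L} k$. There is no genuine obstacle here; the argument uses only the $L$-Lipschitz property and the standard fact that $p$-adic balls have discrete admissible radii, so a ball of any radius $R$ splits into a controlled number of sub-balls of radius $r<R$.
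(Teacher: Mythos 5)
Your proof is correct and follows essentially the same route as the paper: push the cover forward via the Lipschitz bound to get $k$ balls of radius $Lr$, then refine each to boundedly many balls of radius $r$. The only cosmetic difference is in the refinement step: the paper counts the sub-balls via a Haar-measure comparison ($\mu(B(y,K\rho))\asymp (K\rho)^d$ split into disjoint radius-$\rho$ balls of measure $\asymp\rho^d$), whereas you count them explicitly as cosets after enlarging to the next power-of-$p$ radius; these are equivalent standard arguments.
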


\begin{proof}
Let $B(\xx,r)$ be one of the $k$ balls that cover $U$. For any $\yy \in B(\xx,r)$ since $\Phi$ is Lipschitz we have
\begin{equation*}
\| \Phi(\xx)-\Phi(\yy) \|_{p}\leq L \|\xx-\yy\|_{p} \leq Lr,
\end{equation*}
and so $\Phi(B(\xx,r)) \subseteq B(\Phi(\xx), Lr)$ and so $\Phi(U)$ can be covered by $k$ balls of radius $Lr$. We now show the following claim:

\textit{
 For any $\xx \in \Qp^{d}$ and any $\rho, K>0$ the $p$-adic ball $B_{d}(\yy,K\rho)$ can
 be covered by $\ll \max\{1,K^{d}\}$ balls of radius $\rho>0$.}

Using this we can deduce that $\Phi(U)$ can be covered by $\ll L^{d}k$ balls as required. To prove the statement recall by the properties of the ultrametric norm that any two balls are either disjoint or one contains the other, that is,  the intersection is either empty or full. If $1\geq K>0$ then trivially $B(\yy,r)$ is a cover of $B(\yy,K\rho)$ so the number of balls is $\ll 1$. So assume $K>1$.  Let $\{B_{i}\}$ be a collection of balls of radius $\rho$ that cover $B(\yy,K\rho)$.  We can assume this collection is disjoint and finite by the properties of the ultrametric norm and since $B(\yy,K\rho)$ is bounded. Furthermore we can assume 
\begin{equation*}
B(\yy,K\rho)=\bigcup_{i}B_{i}.
\end{equation*}
Now by properties of the $p$-adic $d$-dimensional Haar measure $\mu_{p,d}$ we have that
\begin{equation*}
(K\rho)^{d} \asymp \mu_{p,d}\left( B(\yy,K\rho) \right) = \sum_{i} \mu_{p,d}\left( B_{i} \right) \asymp \sum_{i} \rho^{d}
\end{equation*}
Hence the cardinality of the set of balls $\{B_{i}\}$ is $\asymp K^{d}$.\\
\end{proof}

\begin{proof}[Proof of Lemma~\ref{ll}]
	We proceed as in the proof of \cite[Lemma 2.4]{HSS1}. By translation, without loss of generality, we may assume $\xx=\0$.
	For simplicity let $\kappa:=\|\nabla \phi(\0)\|_{p}$. Clearly $\kappa>0$ as otherwise
	by the assumption of the lemma $\nabla^{2}\phi$ vanishes on an open set which we can easily exclude.
	By rotation we may assume $\nabla \phi(\0)=\kappa\ee_{d}$, where $\ee_{d}$ denotes the $d$-th canonical base vector
	in $\Q_{p}^{d}$. Then $\Vert \nabla \phi(\0)\Vert_{p}=\kappa$.
	Now consider the map 
	$$\Phi: \mathscr{B} := B_{d}(\0,\alpha)\to \Q_{p}^{d}$$ 
	defined by the formula $$\Phi(\yy) = (\kappa y_1,\ldots,\kappa y_{d-2},\kappa y_{d-1},\phi(\yy)),$$
	for $\yy=(y_{1},\ldots,y_{d})\in \Q_{p}^{d}$. We have $$\nabla\Phi(\0) = \kappa I_{d}$$ with $I_{d}$ the 
	$d \times d$ identity matrix.
	On the other hand
	\[
	\sup_{\ww\in \mathscr{B}} \|\nabla^2\Phi(\ww)\|_{p} = \sup_{\ww\in \mathscr{B}} \|\nabla^2 \phi(\ww)\|_{p} \leq \frac{\|\nabla\Phi(\0)\|_{p}}{C\alpha}= \frac{\kappa}{C\alpha}.
	\]
	Denote $B(\kappa I_{d},\kappa/(C\alpha))$ the $d \times d$ matrices with
	distance at most $\kappa/(C\alpha)$ from $\kappa I_{d}$, in terms of the maximum norm
	on $\mathbb{Q}_{p}^{d^{2}}$ (maximum norm $|.|_{p}$ of entries). 
	Since $\mathscr{B}$ has diameter $2\alpha$,
	it follows from the Mean Value Inequality (Theorem~\ref{pmvt}) applied to the gradient $\nabla\Phi$ that $\nabla\Phi(\ww) \in B(\kappa I_{d},2\alpha\cdot \kappa/(C\alpha))=B(\kappa I_{d},2\kappa/C)$ for all $\ww\in \mathscr{B}$. Thus by the Mean Value Inequality applied to $\Phi$, and the strong triangle inequality, for all $\yy,\ww\in \mathscr{B}$ 
	$$\|\Phi(\ww) - \Phi(\yy)\|_{p} \leq  \max\{ \|\kappa I_{d}\|_{p},2\kappa/C\}\|\ww-\yy\|_{p}=
	\max\{ \kappa,2\kappa/C\}\|\ww-\yy\|_{p}.$$ 
	
	The invertible $d\times d$ matrices form an open subset of $\mathbb{Q}_{p}^{d\times d}$ by Proposition~\ref{ppp1} and $\kappa>0$, so we see from the $p$-adic 
	Inverse Function Theorem~\ref{ift} that if $C>2$ is sufficiently large, then $B(\kappa I_{d},2\kappa/C)$ consists of
	invertible matrices. Hence $\Phi$ is bi-Lipschitz with a uniform bi-Lipschitz constant. Note
	\[
	S(\phi,\delta) = \Phi^{-1}\big(B_{d-1}(\0,\alpha)\times (-\delta\kappa,\delta\kappa)\big)
	\]
	and it is clear that the latter set $B_{d-1}(\0,\alpha)\times (-\delta\kappa,\delta\kappa)$ can be covered 
	by $$\asymp \kappa (\delta/\alpha)^{-(d-1)}=\kappa(\alpha/\delta)^{d-1}$$ balls of radius $\delta$.
	Since $\kappa$ is fixed the proof is finished
	via Proposition~\ref{ppp2}, when we allow the implied constant to  depend only on
	the bi-Lipschitz constant.
\end{proof}

We now use Lemma~\ref{ll} in each of the possible cases outlined by Lemma~\ref{h_lem} in order to finish the proof of Lemma~\ref{S(a)_haus_cont}.

\subsection*{\textbf{Case (a)}}
In this case observe that $S(\ba)$ is a $p$-adic $\Psi(\ba)$-thickened hyperplane (of $\Qp^{d}$) so can be covered by $\asymp \Psi(\ba)^{-(d-1)}$ balls of radius $\Psi(\ba)$. Hence
\begin{equation*}
\HH^{f}_{\infty}(S(\ba)) \ll \Psi(\ba)^{-(d-1)}f(\Psi(\ba)).
\end{equation*}

\subsection*{\textbf{Case (b)}}
By the assumption and conclusion of case (b) we have
\begin{align*}
\|\nabla h_{\ba}(\xx)\|_p &\gg \|\ba_{2}\|_{p}^{-1}\|\ba_{1}\|_{p} \\ &\geq \sup_{\ww \in K} \|\widetilde{a_{2}}\ba_{2}\cdot \nabla \textbf{g}(\ww)\|_{p}\\ &\gg_{\ba} 1 \\ &\gg \sup_{\ww \in K}\|\nabla^{2}h_{\ba}(\ww)\|_{p}.
\end{align*}
The final inequality is due to the observation from Lemma~\ref{h_lem} that $\|\nabla^{2}h_{\ba}(\xx)\|_{p}\ll 1$ for all $\xx \in K$, and the constant in the penultimate inequality is dependent on $$\widetilde{a_{2}}\ba_{2} \in \{\zz\in\Z^{n-d}: \|\zz\|_{p}=1\}$$
by \eqref{eq:Wnew}.
Since the supremum is taken over a compact set of $\widetilde{a_{2}}\ba_{2}$, we can choose a uniform lower bound. Note that this constant is strictly positive since $\widetilde{a_{2}}\ba_{2}\cdot \nabla \textbf{g}(\ww)=0$ for all $\ww\in K$ implies $\nabla^{2}h_{\ba}(\ww)=\0$, which we have removed. Thus, for suitably chosen $\epsilon>0$ dependent on the implied constants in the inequalities above and $C>0$ appearing in Lemma~\ref{ll}, let
\begin{equation*}
\alpha=\epsilon , \quad \delta=\frac{\Psi(\ba)}{\widetilde{a_{2}}\|\nabla h_{\ba}(\xx)\|_{p}}\asymp \frac{\Psi(\ba)}{\|\ba_{1}\|_{p}} , \quad \phi=h_{\ba}.
\end{equation*}
Hereby for the equivalence in $\delta$ we used that $\Vert\nabla h_{\ba}(\xx)\Vert_{p}\ll 1$ uniformly
as well since $\nabla h_{\ba}$ is continuous on 
the compact set $K$.
Then by Lemma~\ref{ll}
\begin{equation*}   
S(\ba) \cap B(\xx,\epsilon)
\end{equation*}
can be covered by 
\begin{equation*}
\asymp \epsilon^{(d-1)} \|\ba_{1}\|_{p}^{(d-1)} \Psi(\ba)^{-(d-1)}
\end{equation*}
balls of radius $\asymp \Psi(\ba) \|\ba_{1}\|_{p}^{-1}$. Thus
\begin{align*}
\HH^{f}_{\infty}(S(\ba)) & \ll \|\ba_{1}\|_{p}^{(d-1)} \Psi(\ba)^{-(d-1)} f\left( \frac{\Psi(\ba)}{\|\ba_{1}\|_{p}} \right) \\
& \hspace{-4ex} \overset{\text{condition (Ip)}}{\ll} \|\ba_{1}\|_{p}^{(d-1)-s} \Psi(\ba)^{-(d-1)} f\left( \Psi(\ba) \right) \quad \quad \quad (\text{ since } \|\ba_{1}\|_{p}^{-1}\geq1).
\end{align*}
Since $$\|\ba_{2}\|_{p}^{-1}\|\ba_{1}\|_{p}> \sup_{\ww\in K}\|\widetilde{a_{2}}\ba_{2}\cdot \nabla \textbf{g}(\ww)\|\gg 1$$ we have that $\|\ba_{1}\|_{p}\asymp \|(\ba_{1},\ba_{2})\|_{p}$, so
\begin{equation*}
\HH^{f}_{\infty}(S(\ba)) \ll  \|(\ba_{1},\ba_{2})\|_{p}^{(d-1)-s} \Psi(\ba)^{-(d-1)} f\left( \Psi(\ba) \right).
\end{equation*}
\subsection*{\textbf{Case (c)}}
Fix $k \in \Z$ and consider the $p$-adic annulus
\begin{equation*}
A_{k}=B_{d}(\vv, p^{-k}) \backslash B_{d}(\vv, p^{-(k+1)}).
\end{equation*}
Note that $\{A_{k}\}_{k \in \Z}$ partitions $\Qp^{d}\backslash \{\vv\}$ and since $K$ is bounded there exists some $k_{0} \in \Z$ such that
\begin{equation}\label{A_k_cover}
\bigcup_{k \geq k_{0}}A_{k} \supseteq K \backslash \{\vv\}.
\end{equation}
Observe that $\|\nabla h_{\ba}(\xx) \|_{p} \asymp \|\xx-\vv\|_{p} \asymp p^{-k}$ for all $\xx \in A_{k}$. Note that by Lemma~\ref{h_lem} 
\begin{equation*}
\|\nabla^{2}h_{\ba}(\xx)\|_{p} \ll 1.
\end{equation*}
Choose suitable $\epsilon>0$ such that
\begin{equation*}
\|\nabla h_{\ba}(\xx)\|_{p} \geq C(\epsilon p^{-k}) \sup_{\ww \in K} \|\nabla^2 h_{\ba}(\ww) \|_{p}
\end{equation*}
where $C>0$ comes from Lemma~\ref{ll}. 
 Letting 
\begin{equation*}
\alpha=\epsilon p^{-k}, \quad  \delta=\frac{\Psi(\ba)}{ \widetilde{a_{2}}\|\nabla h_{\ba}(\xx) \|_{p}}\asymp p^{k}\frac{\Psi(\ba)}{\widetilde{a_{2}}}, \quad \phi=h_{\ba}
\end{equation*}
then by Lemma~\ref{ll} we have that $$S(h,\delta)=S(\ba)\cap B_{d}(\xx,\epsilon p^{-k})$$ can be covered by
\begin{equation*}
\asymp \left(\frac{\alpha}{\delta}\right)^{d-1}\asymp \epsilon^{d-1} \widetilde{a_{2}}^{d-1}p^{-2k(d-1)} \Psi(\ba)^{-(d-1)}
\end{equation*}
balls of radius $\asymp p^{k}\frac{\Psi(\ba)}{\widetilde{a_{2}}}$. Observe that $B_{d}(\vv,p^{-k})\supseteq A_k$ can be covered by $p^{d}$ disjoint balls of radius $p^{-(k+1)}$. To see this, first take $\widetilde{\vv} \in \Q^{d}$ to be the rational
vector obtained from cutting off in any component the Hensel digits
from position $k+1$ onwards (which is
of the form $\widetilde{\vv}=N/p^{a}$ for some $N\in\Z^d, a\in \N_0$, with $p^a=\Vert \vv\Vert$, thus $a=0$ and $\widetilde{\vv}\in \Z^d$ if
$\vv\in \Z_p^d$). Then consider the $p^{d}$ balls
of radius $p^{-(k+1)}$ with centres $\widetilde{\vv}+tp^{k+1}$ for $t \in \{0, 1, \dots , p-1\}^{d}$. We remark that one of these balls is $B(\vv,p^{-(k+1)})$, as $\|\vv-\widetilde{\vv}+t\|\leq p^{-(k+1)}$ for some $t \in \{0, 1, \dots , p-1\}^{d}$ and in $p$-adic space if $x \in B(y,r)$ then $B(y,r)=B(x,r)$. So in fact we only require $p^{d}-1$ balls to cover $A_{k}$. Thus
\begin{equation*}
\HH^{f}_{\infty}(S(\ba)\cap A_{k})   \ll (p^{d}-1)p^{- 2k(d-1)} \widetilde{a_{2}}^{(d-1)} \Psi(\ba)^{-(d-1)} f\left( p^{k}\frac{\Psi(\ba)}{\widetilde{a_{2}}}\right).
\end{equation*}
So, by \eqref{A_k_cover},
\begin{align*}
\HH^{f}_{\infty}(S(\ba))  & \ll  \widetilde{a_{2}}^{(d-1)} \Psi(\ba)^{-(d-1)} \sum_{k \geq k_{0}} p^{- 2k(d-1)} f\left( p^{k}\frac{\Psi(\ba)}{\widetilde{a_{2}}}\right) \\
& \overset{\text{ condition (Ip) }}{\ll}  \widetilde{a_{2}}^{(d-1)-s} \Psi(\ba)^{-(d-1)} f\left(\Psi(\ba)\right) \sum_{k \geq k_{0}} p^{-2k(d-1)+ks} \\
& \overset{s<2(d-1)}{\ll}\widetilde{a_{2}}^{(d-1)-s}\Psi(\ba)^{-(d-1)} f\left( \Psi(\ba)\right).
\end{align*}
Note that in order for such $\vv\in K$ appearing in case (c) to exist we have that
\begin{equation*}
\nabla h_{\ba}(\vv)=0 \quad \implies \quad \widetilde{a_{2}}\ba_{1}=\widetilde{a_{2}}\ba_{2}\cdot \nabla \textbf{g}(\vv).
\end{equation*}
By the strong triangle inequality, we can deduce that
\begin{equation*}
\|\ba_{1}\|_{p}=\|\ba_{2}\cdot \nabla \textbf{g}(\vv)\|_{p}\le \|\ba_{2}\|_{p}\cdot \|\nabla \textbf{g}(\vv)\|_{p} \ll \|\ba_{2}\|_{p},
\end{equation*}
and so $\widetilde{a_{2}}\overset{\text{def}}{=}\|\ba_{2}\|_{p}\asymp \|(\ba_{1},\ba_{2})\|_{p}$. Hence
\begin{equation*}
\HH^{f}_{\infty}(S(\ba)) \ll \|(\ba_{1},\ba_{2})\|_{p}^{(d-1)-s}\Psi(\ba)^{-(d-1)} f\left( \Psi(\ba)\right).
\end{equation*}

\subsection*{\textbf{Case (d)}}
Since $\|\nabla h_{\ba}(\xx)\|_{p}\asymp 1$ we have that, for suitably chosen $C, \epsilon>0$,
\begin{equation*}
\|\nabla h_{\ba}(\xx)\|_{p}\geq C \epsilon \sup_{\ww \in K} \|\nabla^{2} h_{\ba}(\ww)\|_{p}.
\end{equation*}
Applying Lemma~\ref{ll} with
\begin{equation*}
\alpha=\epsilon, \quad \delta=\frac{\Psi(\ba)}{\widetilde{a_{2}}\|\nabla h_{\ba}(\xx)\|_{p}}\asymp\frac{\Psi(\ba)}{\widetilde{a_{2}}}, \quad \phi=h_{\ba}
\end{equation*}
we have that $S(\ba)$ can be covered by 
\begin{equation*}
\asymp \epsilon^{(d-1)} \widetilde{a_{2}}^{(d-1)} \Psi(\ba)^{-(d-1)}
\end{equation*}
balls of radius $\asymp \frac{\Psi(\ba)}{\widetilde{\ba_{2}}}$. Thus
\begin{align*}
\HH^{f}_{\infty}(S(\ba)) & \ll \widetilde{a_{2}}^{(d-1)} \Psi(\ba)^{-(d-1)} f\left( \frac{\Psi(\ba)}{\widetilde{a_{2}}}\right) \\
&\overset{\text{condition (Ip)}}{\ll} \widetilde{a_{2}}^{(d-1)-s}\Psi(\ba)^{-(d-1)} f(\Psi(\ba)).
\end{align*}
Note that in this case we have that $$\|\ba_{1}\|_{p} \leq \sup_{\ww \in K}\|\ba_{2}\cdot \nabla g(\ww) \|_{p} \ll \|\ba_{2}\|_p\overset{\text{def}}{=}\widetilde{a_{2}}$$ and so $\widetilde{a_{2}}\asymp \|(\ba_{1}, \ba_{2})\|_{p}$. Thus
\begin{equation*}
\HH^{f}_{\infty}(S(\ba))\ll \|(\ba_{1},\ba_{2})\|_{p}^{(d-1)-s}\Psi(\ba)^{-(d-1)} f(\Psi(\ba)).
\end{equation*}

Combining the outcomes of the above four cases,  we have that for any $\ba=(a_{0},a_{1}, \dots , a_{n})=(a_{0}, \ba_{1}, \ba_{2}) \in \Z\times(\Z^{n}\backslash\{\0\})$, \begin{equation*}
\HH^{f}_{\infty}(S(\ba)) \ll \|(\ba_{1},\ba_{2})\|_{p}^{(d-1)-s} \Psi(\ba)^{-(d-1)}f(\Psi(\ba)).
\end{equation*}
The proof of Lemma~\ref{S(a)_haus_cont} is complete.
\qed

\section{Completing the proof of Theorem~\ref{dual_hypersurface_inhomogeneous}} \label{complete}
Note that for any subset of integers $Z \subseteq \Z^{n+1}\backslash \{\0\}$ we have
\begin{equation*}
E=E(Z)=\{\xx \in K : (\xx,g(\xx)) \in D_{n,p}^{\theta}(\Psi, Z)\}=\limsup_{\ba \in Z}S(\ba),
\end{equation*}
The map $G: \xx \mapsto (\xx,g(\xx))$ is bi-Lipschitz since $K$ is bounded, and so
\begin{equation*}
\HH^{f}(G(E)) \asymp \HH^{f}(D_{n,p}^{\theta}(\Psi,Z)\cap \{(\xx,\textbf{g}(\xx)): \xx \in K\}) \asymp  \HH^{f}(E).
\end{equation*}

Now, by the Hausdorff-Cantelli Lemma, we have that
\begin{equation*}
\HH^{f}(E)=0 \quad \text{ if } \quad \sum_{\ba \in Z} \HH^{f}_{\infty}(S(\ba))< \infty,
\end{equation*}
and so 
\begin{equation*}
\HH^{f}(D_{n,p}^{\theta}(\Psi,Z)\cap \MM) =0 \quad \text{ if } \quad  \sum_{\ba \in Z}  \HH^{f}_{\infty}(S(\ba))< \infty.
\end{equation*}
By Lemma~\ref{S(a)_haus_cont} the above summation can be written as
\begin{equation} \label{sum}
\sum_{\ba \in \{Z: (\ba_{1},\ba_{2}) \neq \0\}} \|(\ba_{1},\ba_{2})\|_{p}^{(d-1)-s} \Psi(\ba)^{-(d-1)}f(\Psi(\ba)) \, \, + \, \, \sum_{\ba \in \{Z: (\ba_{1},\ba_{2}) = \0\}} \Psi(\ba)^{-(d-1)}f(\Psi(\ba)).
\end{equation}
Since
\begin{equation*}
\sum_{\ba \in Z} \Psi(\ba)^{-(d-1)}f(\Psi(\ba))<\infty
\end{equation*}
by assumption (in whichever case of $Z$ is chosen) the second summation is convergent, so it remains to prove convergence of the first sum. This summation can be rewritten as
\begin{equation*}
\sum_{k \in \N_{0}} \sum_{\ba \in Z(p,k) } p^{k(s-(d-1))}\Psi(\ba)^{-(d-1)}f(\Psi(\ba)),
\end{equation*}
where 
\begin{equation*}
Z(p,k)= \{(a_{0}, \dots , a_{n}) \in Z : p^{k}| a_{i}  \, \forall \, 1\leq i \leq n \, \, \& \, \,  \exists \, 1\leq j \leq n \, \, p^{k+1} \not| a_{j}\}.
\end{equation*}
Note that the index $0$ is excluded from the divisibility conditions, so in other words
$$Z(p,k)=\{(a_{0},\dots, a_{n})\in Z: \|(\ba_{1},\ba_{2})\|_{p}=p^{-k}\}.$$ We will see that we can
 restrict ourselves to finite partial sums in all three cases.
In the following arguments we will frequently implicitly use the well-known fact that
\[
|a+b|_p = \max\{ |a|_p , |b|_p \}, \qquad \text{if}\; |a|_p\ne |b|_p.
\]

 Now let us consider the cases in our distinction one by one:
\begin{itemize}

\item[(i)] $Z=Z(1)$ (pairwise coprime) and inhomogeneous: Then $\|(\ba_{1},\ba_{2})\|_{p}=1$, and so \eqref{sum} becomes the usual summation
as in (i), which is convergent by assumption. \\ 

\item[(ii)] $Z=Z(2)$ (coprime): If $p|a_{0}$ then there exists some $1\leq i \leq n$ such that $p \not| a_{i}$, and so  $\|(\ba_{1},\ba_{2})\|_{p}=1$. Thus convergence of the summation is immediate by assumption. Henceforth assume $|a_{0}|_{p}=1$. 
We may assume there exists $\ell \in \N_{0}$ such that 
\begin{equation}  \label{eq:tor}
\|(\xx,\textbf{g}(\xx))\|_{p}\leq p^{\ell} \quad \forall \,\, \xx \in K,
\end{equation}
since this is true for any compact subset of $K$ and by sigma-additivity of measures.
First assume $\theta=0$.
The strong triangle inequality, and the fact that $\Psi(\ba)<1$ for sufficiently large $\|\ba\|$, implies that for $S(\ba)$ to be non-empty we at least require
\begin{equation*}
|\ba_{1}\cdot \xx + \ba_{2} \cdot \textbf{g}(\xx)|_{p}=|a_{0}|_{p}=1.
\end{equation*}
If $\|(\ba_{1},\ba_{2})\|_{p}\leq p^{-\ell-1}$ then this cannot be true for any $\xx\in K$ since
\[
|\ba_{1}\cdot \xx + \ba_{2} \cdot \textbf{g}(\xx)|_{p}\le
 \Vert (\ba_1, \ba_2)\Vert_p \cdot \Vert (\xx,\textbf{g}(\xx))\Vert_p \le p^{-1}<1
\]
so we must have that $\|(\ba_{1},\ba_{2})\|_{p}> p^{-\ell-1}$,
hence $\|(\ba_{1},\ba_{2})\|_{p}\ge p^{-\ell}$.
Thus, in the cover of $E$ we only need to consider $S(\ba)$ with $\|(\ba_{1},\ba_{2})\|_{p}=p^{-k}$ for $k=0,\dots,\ell$. That is, we only need to show convergence of the summation
\begin{equation} \label{finite_sums}
\sum_{k=0}^{\ell} \sum_{\ba \in Z(p,k) } p^{k(s-(d-1))}\Psi(\ba)^{-(d-1)}f(\Psi(\ba)).
\end{equation}
Note that for each $k=0, \dots , \ell$ the inner sum is convergent (by assumption), so since the outer sum is finite we indeed have convergence.

Now assume $\theta\ne 0$ and $|\theta|_p\neq 1$. Let $p^{y}:= \vert \theta|_p$ for an integer $y<0$ (note we assume $\theta\in \Q_p$. We can assume \eqref{eq:tor}
so that if $\Vert (\ba_1,\ba_2)\Vert_p\le p^{-\ell-1}$ we have
 \begin{equation} \label{letztes}
 \Vert (\ba_1, \ba_2) \cdot (\xx,\textbf{g}(\xx)) + a_0 + \theta\Vert_p  \le 
 \max\{ \Vert (\ba_1,\ba_2)\Vert_p \cdot 
 \Vert (\xx,\textbf{g}(\xx))\Vert_p, |a_{0}|_{p}, |\theta|_p \}
 \end{equation}
 and there is in fact equality. To see this observe that 
 \[
 \Vert (\ba_1,\ba_2)\Vert_p \cdot 
 \Vert (\xx,\textbf{g}(\xx))\Vert_p\le p^{-\ell-1}p^{\ell}= p^{-1}<1,
 \] 
moreover $p^{y}<1$ as we noticed $y<0$, and $|a_{0}|_{p}=1$.
Hence
 \[
 \Vert (\ba_1, \ba_2) \cdot (\xx,\textbf{g}(\xx))+ a_0+ \theta \Vert_p=\max\{1,p^{y}\}=1
 \]
 is absolutely bounded from below uniformly on $K$. Since $\Psi\to 0$
 there are only finitely many solutions (if any) $\ba$ to $\Vert (\ba_1, \ba_2) \cdot (\xx,\textbf{g}(\xx))+ a_0+ \theta \Vert_p < \Psi(\ba)$. This argument
 again shows we may restrict to  $\Vert (\ba_1, \ba_2)\Vert_p\ge p^{-\ell}$ leading to a finite sum
 \[
 \sum_{k=0}^{  \ell } \sum_{\ba \in Z(p,k) } p^{k(s-(d-1))}\Psi(\ba)^{-(d-1)}f(\Psi(\ba)),
 \]
which converges for similar reasons as the sum for $\theta=0$. \\

\item[(iii)] $Z=\Z^{n+1}\backslash \{\0\}$, $\theta=0$, and $\Psi$ satisfies $p\Psi(\ba)\le \Psi(p^{-1}\ba)$: Again, we may assume that there exists $\ell \in \N$ such that \eqref{eq:tor} holds.
 Suppose that $0\neq \|(\ba_{1},\ba_{2})\|_{p}\leq p^{-\ell-1}$. We claim that in order for $S(\ba)$ to be non-empty we need $p|a_{0}$. From this we will deduce that $S(p^{-1}\ba)\supseteq S(\ba)$. \par 
By \eqref{eq:tor} we have
\begin{align*}
| \ba_{1}\cdot \xx+\ba_{2}\cdot\textbf{g}(\xx) + a_{0}|_{p} &\leq \max \left\{| \ba_{1}\cdot \xx+\ba_{2}\cdot\textbf{g}(\xx)|_{p}, |a_{0}|_{p}\right\}\\ &\le
\max\{ \|(\ba_{1},\ba_{2})\|_{p} \cdot \|(\xx,\boldsymbol{g}(\xx))\|_{p}, |a_0|_p\}\\ &\le \max\{ p^{-\ell-1}p^{\ell} , |a_0|_p\}\\ &\le \max\{ p^{-1},|a_0|_p \}.
\end{align*}
There is equality in the above inequalities (apart from possibly the third) if $|a_0|_p=1>p^{-1}$, leading us in any case to conclude that $| \ba_{1}\cdot \xx+\ba_{2}\cdot\textbf{g}(\xx) + a_{0}|_{p}=1<\Psi(\ba)$ which cannot be true for sufficiently large $\|\ba\|$. Thus $|a_{0}|_{p}<1$ and so $p|a_{0}$. Hence $p^{-1}\ba=(\ba_{1}',\ba_{2}', a_{0}') \in \Z^{n+1}\backslash \{\0\}$ and if $\xx \in S(\ba)$ then
\begin{equation*}
|\ba_{1}'\cdot \xx+\ba_{2}'\cdot\textbf{g}(\xx)+ a_{0}'|_{p}<p \Psi(\ba)\le \Psi(p^{-1}\ba)
\end{equation*}
thus $\xx \in S(p^{-1}\ba)$, and so $S(\ba)\subseteq S(p^{-1}\ba)$. 
Since $D_{n,p}(\Psi)$ is defined as the limsup set of $S(\ba)$ over integer
vectors $\ba$,
this argument implies that we only need to consider integer points $\ba$ with $\|(\ba_{1},\ba_{2})\|_{p}\ge p^{-\ell}$. 
This again leads us to the convergent sum \eqref{finite_sums}. \par
Lastly, if $(\ba_{1},\ba_{2})=\0$ then for $S(\ba)$ to be non-empty we have that
\begin{equation*}
    |a_{0}|_{p}<\Psi(\ba)<|a_{0}|^{-1}\, ,
\end{equation*}
which cannot be true, so in this case $S(\ba)=\emptyset$.


\end{itemize}

\providecommand{\bysame}{\leavevmode\hbox to3em{\hrulefill}\thinspace}
\providecommand{\MR}{\relax\ifhmode\unskip\space\fi MR }
\providecommand{\MRhref}[2]{%
  \href{http://www.ams.org/mathscinet-getitem?mr=#1}{#2}
}
\providecommand{\href}[2]{#2}

\end{document}